\DeclareMathOperator{\diver}{\rm div}
\DeclareMathOperator{\Hes}{\rm Hes}
\DeclareMathOperator{\Ric}{\rm Ric}
\DeclareMathOperator{\tr}{\rm tr}
\DeclareMathOperator{\spanned}{\rm span}
\newcommand{\nh}{\nabla h}
\newtheorem{theorem}{Theorem}[section]
\newtheorem{lemma}[theorem]{Lemma}
\newtheorem{corollary}[theorem]{Corollary}
\newtheorem{example}[theorem]{Example}
\theoremstyle{definition}
\theoremstyle{remark}
\newtheorem{remark}[theorem]{Remark}
\begin{document}
\title[Weighted Einstein field equations]{Vacuum Einstein field equations in smooth metric measure spaces: the isotropic case}
\author{M. Brozos-V\'azquez, D. Moj\'on-\'Alvarez}
\address{MBV: CITMAga, 15782 Santiago de Compostela, Spain}
\address{\phantom{MBV:}  
	Universidade da Coru\~na, Campus Industrial de Ferrol, Department of Mathematics, 15403 Ferrol,  Spain}
\email{miguel.brozos.vazquez@udc.gal}
\address{DMA: CITMAga, 15782 Santiago de Compostela, Spain}
\address{\phantom{DMA:}
	University of Santiago de Compostela,
15782 Santiago de Compostela, Spain}
\email{diego.mojon@rai.usc.es}
\thanks{Supported by projects PID2019-105138GB-C21/AEI/10.13039/ 501100011033 (Spain) and ED431C 2019/10 (Xunta de Galicia, Spain).}
\subjclass[2020]{53B30, 53C50, 53C21, 53C24.}
\date{}
\keywords{Smooth metric measure space, vacuum Einstein field equations, Bakry-\'Emery Ricci tensor, Kundt spacetime, Brinkmann wave, $pp$-wave, plane wave}

\maketitle

\begin{abstract} 
On a smooth metric measure spacetime $(M,g,e^{-f} dvol_g)$, we define a weighted Einstein tensor. It is given in terms of the Bakry-\'Emery Ricci tensor as a tensor which is symmetric, divergence-free, concomitant of the metric and the density function. We consider the associated vacuum weighted Einstein field equations and show that isotropic solutions have nilpotent Ricci operator. Moreover, the underlying manifold is a Brinkmann wave if it is $2$-step nilpotent and a Kundt spacetime if it is $3$-step nilpotent. More specific results are obtained in dimension $3$, where all isotropic solutions are given in local coordinates as plane waves or Kundt spacetimes. 
\end{abstract}

\section{Introduction}

Spacetimes can be generalized by introducing a density function $f$ that gives rise to a smooth metric measure space $(M,g,e^{-f} dvol_g)$. The influence of the density on the geometry of the manifold is expressed in terms of the Bakry-\'Emery Ricci tensor, which is defined as
\begin{equation}\label{eq:Bakry-Emery-Ricci-tensor}
\rho^f=\rho+\operatorname{Hes}_f-\mu df\otimes df
\end{equation}
where $\rho$ is the Ricci tensor, $\Hes_f$ is the Hessian of the function $f$ and $\mu$ is a constant. 
This tensor has been extensively studied, especially in the Riemannian setting (we refer to \cite{Lott} and references therein for some geometric properties). Although it was introduced in relation to diffusion processes \cite{Bakry-Emery}, it gave rise to the notion of quasi-Einstein manifolds (see, for example, \cite{Case-Shu-Wei,Catino2012,Catino2013} for some results in Riemannian signature and \cite{Isotropic_quasi-Einstein} in Lorentzian signature). 
 The Bakry-\'Emery Ricci tensor is an essential object in smooth metric measure spacetimes and, in a certain sense, it plays a substituting role of the usual Ricci tensor. 
 It arises in scalar–tensor gravitation
 theories, in particular when the Jordan frame is used as conformal gauge \cite{Woolgar}.
An extension of previous results to this new framework was given by Case \cite{Case}, who stated new versions of the singularity and the timelike splitting theorem in terms of this tensor.

The Einstein tensor on a spacetime $(M,g)$ is symmetric, divergence-free, concomitant of the metric tensor $g$ and its first two derivatives and linear in the second derivatives of $g$. Moreover, Lovelock \cite{Lovelock} showed that, in dimension four, these properties essentially characterize the Einstein tensor as $G=\rho+\Lambda g$, where $\Lambda$ is a constant. Our first objective is to define a tensor on a smooth metric measure space that suitably generalizes the Einstein tensor while also satisfying analogous characterizing properties.

\subsection{A weighted analogue of the Einstein tensor}
From \eqref{eq:Bakry-Emery-Ricci-tensor}, consider $\mu=1$ and the positive function $h=e^{-f}$ to rewrite a Bakry-\'Emery Ricci tensor as follows:
\[
\rho^h=\rho-\frac{\operatorname{Hes}_h}h.
\]
The particular choice $\mu=1$ is motivated by the properties we will obtain for the new tensor that we are going to define, but is also justified by geometric reasons (see  Remark~\ref{re:static} and Corollary~\ref{cor:4-warped} below). Since a generalization of the Einstein tensor must be concomitant of the metric tensor, we shall allow a summand which is a multiple of $g$. Thus, we shall consider 
a tensor of the form $\rho^h+\lambda g$, where $\lambda$ is a function on $M$. A linearization of this tensor results in
\[
G^h=h \rho-\operatorname{Hes}_h +\lambda h g.
\]
Let $\Ric$ denote the Ricci operator ($\rho(X,Y)=g(\Ric X,Y)$) and let $\tau$ denote the scalar curvature. Einstein manifolds have constant scalar curvature and we will show (see Lemma~\ref{le:const-sc} below) that the weighted analog that we are going to define also has this property. Hence, we  assume that $\tau$ is constant to compute the divergence of $G^h$:
\[
\begin{array}{rcl}
	\diver(G^h)&=& \diver(h\rho)-\diver \Hes_h+\diver (\lambda h g)\\
	\noalign{\medskip}
	&=& h \diver \rho + \iota_{\nh}\rho-d \Delta h - \iota_{\nh}\rho+ d (\lambda h)\\
	\noalign{\medskip}
	&=&  \frac12 h \, d\tau-d\Delta h +d (\lambda h)\\
	\noalign{\medskip}
	&=& d(\lambda h-\Delta h),
\end{array}
\]
where $\iota$ denotes the interior product, $\iota_X\rho=\rho(X,\cdot)$,
and we have used the contracted Bianchi identity $\diver \rho=\frac{1}2 d\tau$ and the Bochner formula $\operatorname{div}\operatorname{Hes}_h=d \Delta h+ \iota_{\nh}\rho$.
Thus, for $G^h$ to be divergence-free if $\tau$ is constant, we get that $\lambda h=\Delta h+\Lambda$, where $\Lambda$ plays the role of a cosmological constant. Consequently, we define a {\it weighted Einstein tensor} on a smooth metric measure space $(M,g,h\, dvol_g)$ by
\begin{equation}\label{eq:weighted-Einstein-tensor}
	G^h=h \rho-\operatorname{Hes}_h +(\Delta h+\Lambda) g,
\end{equation}
as a symmetric, divergence-free tensor, concomitant of the metric $g$ and the positive density $h$
and their first two derivatives. Moreover, understanding $\Lambda$ as a cosmological constant, the remaining tensor $h \rho-\operatorname{Hes}_h +\Delta hg$ is linear in the function $h$. Notice that $G^h$ is a strict generalization of the Einstein tensor, since $G^h=G$ if $h=1$. Henceforth we work in a proper smooth metric measure space, therefore $h$ is assumed to be nowhere constant so that $\nabla h\neq 0$ on any open subset.

\subsection{The vacuum weighted Einstein field equation}

From the weighted Einstein tensor, the {\it weighted Einstein field equation} is set to be $G^h=T$, where $T$ is a stress-energy tensor.  In a vacuum setting, we have $T=0$, so we define the {\it vacuum weighted Einstein field equation} as $G^h=0$, this is
\begin{equation}\label{eq:vacuum-Einstein-field-equations}
h \rho-\operatorname{Hes}_h +(\Delta h+\Lambda) g=0.
\end{equation} 
Equation \eqref{eq:vacuum-Einstein-field-equations} with $\Lambda=0$ was considered in Riemannian signature in \cite{Fischer-Marsden} from a different point of view, as it arises from the linearization of the scalar curvature function (see Remark~\ref{re:static} below).  Moreover, it was shown that, for non-constant $h$, the scalar curvature of any solution is constant. The argument extends to the Lorentzian setting and arbitrary $\Lambda$ as follows (we include details in the interest of self-containment).
\begin{lemma}\label{le:const-sc}
Let $(M,g,h\, dvol_g)$ be a smooth metric measure space that solves the vacuum weighted Einstein field equation, then the scalar curvature is constant.
\end{lemma}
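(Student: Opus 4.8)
The plan is to exploit the fact that a solution of \eqref{eq:vacuum-Einstein-field-equations} has $G^h$ vanishing identically, so that $\diver(G^h)=0$, and to repeat the divergence computation carried out above \emph{without} assuming from the outset that $\tau$ is constant. The only point of departure from the motivating calculation is that the contracted Bianchi term is no longer discarded; in fact that very term will carry the entire conclusion.

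Concretely, I would compute $\diver(G^h)$ term by term, using again the contracted Bianchi identity $\diver\rho=\tfrac12 d\tau$ and the Bochner formula $\diver\operatorname{Hes}_h=d\Delta h+\iota_{\nh}\rho$. The divergence of $h\rho$ produces $h\,\diver\rho+\iota_{\nh}\rho=\tfrac12 h\,d\tau+\iota_{\nh}\rho$; the divergence of $\operatorname{Hes}_h$ gives $d\Delta h+\iota_{\nh}\rho$; and since $\Lambda$ is constant, the divergence of $(\Delta h+\Lambda)g$ is simply $d\Delta h$. The two interior-product terms $\iota_{\nh}\rho$ cancel against each other, as do the two occurrences of $d\Delta h$, leaving
\[
\diver(G^h)=\tfrac12\,h\,d\tau .
\]

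Since the field equation asserts $G^h=0$, its divergence is identically zero, whence $\tfrac12\,h\,d\tau=0$. The density $h=e^{-f}$ is strictly positive and hence nowhere zero, so $d\tau=0$; that is, $\tau$ is constant. I do not anticipate any genuine obstacle here: the argument is a direct application of the two standard curvature identities already invoked in the excerpt, and the single subtlety is one of bookkeeping, namely to resist assuming $\tau$ constant in advance—an assumption that was legitimate only in the heuristic computation used to \emph{define} $G^h$—so that the Bianchi term survives and yields the result.
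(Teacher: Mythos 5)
Your proposal is correct and follows exactly the paper's own proof: take the divergence of $G^h=0$, use the contracted Bianchi identity and the Bochner formula so that the $\iota_{\nabla h}\rho$ and $d\Delta h$ terms cancel, and conclude $\tfrac12 h\,d\tau=0$, hence $d\tau=0$ since $h>0$. No gaps.
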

\begin{proof}
	We take the divergence of equation \eqref{eq:vacuum-Einstein-field-equations} to see, using the  Bochner formula and the contracted Bianchi identity, that $0=h \diver \rho+ \iota_{\nh}\rho-\diver \Hes_h+d \Delta h=\frac12 h\, d\tau$. Hence, since $h\neq 0$ in every open subset, we conclude that $\tau$ is constant.
\end{proof}

\begin{remark}\label{re:static}
We shall point out that equation~\eqref{eq:vacuum-Einstein-field-equations} with $\Lambda=0$ is also formally related to the static perfect fluid equation (see \cite{Kobayashi,Lafontaine}), which is studied in a purely Riemannian context, since it derives from a Lorentzian situation by reducing a timelike dimension. Moreover, the same equation  appears with a different motivation in the following context. Let $L_g$ be the linearization of the scalar curvature function on a closed manifold. Its formal $L^2$-adjoint is given by $L_g^\ast f=-f \Ric_g +\Hes_f - (\Delta_g f)g$ (we refer to \cite{Besse,Bourguignon,Fischer-Marsden} for details). Considering the space of manifolds with constant scalar curvature, critical metrics for the volume functional admit non-trivial solutions for the equation $L_g^\ast f=\kappa g$ for $\kappa$ constant. This analysis was localized to the case where the metric deformation is supported on the closure of a bounded domain in \cite{Corvino-etal,Miao-Tam}, defining the $V$-static spaces.
\end{remark}

The  causal character of $\nabla h$  crucially influences the geometry of solutions to the Einstein field equation. Depending on the character of $\nabla h$ the approach in treating an equation like \eqref{eq:vacuum-Einstein-field-equations} is different, as are often distinct the features of the solutions. In this note we focus on the case in which $\nabla h$ is a lightlike vector field. Thus, we fix notation and say that a smooth metric measure space $(M,g,h\, dvol_g)$ is an {\it isotropic solution} of the vacuum weighted Einstein field equation if  \eqref{eq:vacuum-Einstein-field-equations} is satisfied and $\nabla h$ is lightlike.

\subsection{Main results}

Our main aim is to characterize isotropic solutions to the vacuum weighted Einstein field equation \eqref{eq:vacuum-Einstein-field-equations}, i.e. solutions with lightlike $\nabla h$, and describe their underlying geometric structure.
At first we consider spacetimes of arbitrary dimension $n\geq 3$. 
We will see that, in general, solutions are realized on Kundt spacetimes and, in certain cases, on Brinkmann waves. Moreover, the scalar curvature vanishes and the Ricci operator is nilpotent. We summarize the description of the geometry of the solutions in terms of the nilpotency of the Ricci operator as follows. 

\begin{theorem}\label{th:main}
	Let $(M,g,h\, dvol_g)$ be an isotropic solution of the vacuum weighted Einstein field equation. Then one of the following possibilities holds:
	\begin{enumerate}
		\item $(M,g)$ is Ricci-flat and $\Hes_h=0$.
		\item The Ricci operator is $2$-step nilpotent and $(M,g)$ is a Brinkmann wave.
		\item The Ricci operator is $3$-step nilpotent and $(M,g)$ is a Kundt spacetime.
	\end{enumerate}
\end{theorem}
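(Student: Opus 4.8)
The plan is to convert \eqref{eq:vacuum-Einstein-field-equations} into pointwise algebraic information about the Ricci operator, prove that this operator is nilpotent, and then recover the geometry from $\nabla\nh$.

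First I would take the trace of \eqref{eq:vacuum-Einstein-field-equations} to get $(n-1)\Delta h=-(\tau h+n\Lambda)$; since $\tau$ is constant by Lemma~\ref{le:const-sc}, this yields $\nabla\Delta h=-\tfrac{\tau}{n-1}\nh$, so $\nabla\Delta h$ is proportional to $\nh$. Differentiating $g(\nh,\nh)=0$ gives $\Hes_h(\nh,\cdot)=0$; writing the equation as the operator identity $\h_h=h\,\Ric+(\Delta h+\Lambda)\Id$ and evaluating on $\nh$ then shows $\Ric\,\nh=\alpha\,\nh$ with $\alpha=-\tfrac{\Delta h+\Lambda}{h}$, so $\nh$ is a lightlike eigenvector, $\nabla_{\nh}\nh=0$, and $\h_h=h(\Ric-\alpha\,\Id)$.

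The decisive step is a Bochner computation. As $g(\nh,\nh)\equiv0$, the identity $\tfrac12\Delta\,g(\nh,\nh)=|\Hes_h|^2+g(\nh,\nabla\Delta h)+\rho(\nh,\nh)$ has vanishing left-hand side, while the last two terms vanish because $\rho(\nh,\nh)=\alpha\,g(\nh,\nh)=0$ and $\nabla\Delta h\parallel\nh$; hence $|\Hes_h|^2=\tr(\h_h^2)=h^2\,\tr\!\big((\Ric-\alpha\,\Id)^2\big)=0$. Now I would use that $\Ric$ is $g$-self-adjoint and admits the null eigenvector $\nh$: in a pseudo-orthonormal frame $\{\nh,U,E_1,\dots,E_{n-2}\}$ the operator $\Ric$ is block-triangular, its null off-diagonal components contribute nothing to $\tr(\h_h^2)$, and one is left with $\tr\!\big((\Ric-\alpha\,\Id)^2\big)=\sum_i(\mu_i-\alpha)^2$, where $\mu_i$ are the real eigenvalues of the symmetric endomorphism $Q$ induced on the spacelike screen. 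A sum of real squares being zero forces $\mu_i=\alpha$ for all $i$, so $\alpha$ is the only eigenvalue of $\Ric$ and $\tau=n\alpha$. Substituting into $\alpha=\tfrac{\tau}{n-1}+\tfrac{\Lambda}{(n-1)h}$ gives $\tau=-n\Lambda/h$, and since $\tau$ is constant while $h$ is not, this forces $\Lambda=0$, $\alpha=0$ and $\tau=0$. Thus $\Ric$ has $0$ as its only eigenvalue, i.e. it is nilpotent, and moreover $\Delta h=0$ and $\h_h=h\,\Ric$.

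Finally I would read off the geometry from $\nabla_X\nh=h\,\Ric\,X$ according to the nilpotency step. If $\Ric=0$, then $\Hes_h=0$ and $(M,g)$ is Ricci-flat, which is case (1). If $\Ric\neq0$ but $\Ric^2=0$, the block form forces $\rho=a\,dh\otimes dh$, whence $\nabla_X\nh\in\spanned\{\nh\}$ for every $X$; thus $\nh$ is a recurrent lightlike vector field spanning a parallel null line, and $(M,g)$ is a Brinkmann wave, which is case (2). If $\Ric^2\neq0$ (so $\Ric^3=0$), the same description gives $\nabla_X\nh\in\spanned\{\nh\}$ only for $X\perp\nh$, i.e. $\nh$ is geodesic with vanishing expansion, shear and twist, so $(M,g)$ is a Kundt spacetime, which is case (3). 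I expect the main obstacle to be the middle step: seeing that the null condition feeds the Bochner formula to annihilate $|\Hes_h|^2$, and then exploiting self-adjointness together with the lightlike eigenvector to turn $\tr((\Ric-\alpha\,\Id)^2)=0$ into the vanishing of the screen part, after which nilpotency, the value of $\tau$, and the trichotomy all follow formally.
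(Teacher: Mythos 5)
Your proposal is correct and reaches all three conclusions, but the central step is derived by a different route than the paper's. Where you obtain the key scalar identity from the function Bochner formula applied to the identically vanishing quantity $g(\nh,\nh)$ --- killing the left-hand side and the terms $\rho(\nh,\nh)$ and $g(\nh,\nabla\Delta h)$ to force $|\Hes_h|^2=\tr(\h_h^2)=h^2\tr\bigl((\Ric-\alpha\Id)^2\bigr)=0$ --- the paper instead expands the contracted second Bianchi identity $\diver\rho(\nh)=\frac12 d\tau(\nh)=0$ term by term in the pseudo-orthonormal frame, arriving at $\alpha\Delta h-\tr(\Ric\circ\operatorname{hes}_h)=0$ and then at $\sum_i\gamma_i^2/h=0$ componentwise (Lemma~\ref{le:Ricci-nilpotent}). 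Since $\h_h=h(\Ric-\alpha\Id)$, the two identities are equivalent, and both reduce the problem to the same sum of real squares coming from the positive-definite screen; your version is arguably cleaner in that it avoids the frame computation and makes transparent why only the screen eigenvalues survive in the trace (block-triangularity of $\Ric-\alpha\Id$ with respect to the flag $\spanned\{\nh\}\subset\nh^\perp\subset TM$). The remaining architecture coincides with the paper's: the geodesic/eigenvector lemma, the elimination of $\Lambda$, $\tau$ and $\Delta h$ from constancy of $\tau$ versus non-constancy of $h$, and the trichotomy read off from $\nabla_X\nh=h\Ric X$. Two small points worth making explicit in a write-up: in the $2$-step case, $\Ric(\nh^\perp)\subseteq\spanned\{\nh\}$ together with $\Ric^2=0$ and self-adjointness forces all screen components of $\Ric U$ to vanish (their squares sum to the $\nh$-coefficient of $\Ric^2U$), which is what justifies $\rho=a\,dh\otimes dh$; and in the $3$-step case the Kundt conclusion should be justified by the optical scalars themselves --- twist-free because $\nh$ is a gradient, expansion-free because $\Delta h=0$, shear-free because $|\Hes_h|^2=0$ and $\theta=0$ --- exactly as the paper does, rather than by the recurrence-type condition on $\nh^\perp$ alone.
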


In dimension three the geometry of the manifold is more rigid than in higher dimension. This implies, for example, that all Brinkmann waves that are solutions of \eqref{eq:vacuum-Einstein-field-equations} are indeed plane waves. Moreover, this rigidity allows us to describe the geometry of isotropic solutions of the vacuum weighted Einstein field equation in more detail in local coordinates, together with the explicit expression of the function $h$, as follows.

\begin{theorem}\label{th:3-dim}
	Let  $(M,g,h\, dvol_g)$ be a non-flat $3$-dimensional isotropic solution of the vacuum weighted Einstein field equation. Then, the Ricci operator is nilpotent and one of the following holds:
	\begin{enumerate}
		\item If $\Ric$ is $2$-step nilpotent then $(M,g)$ is a plane wave and there exist local coordinates $(u,v,x)$ such that 
		\begin{equation}\label{eq:plane-wave-3dim-solutions}
		g(u,v,x)=dv\left(2du-\frac{\alpha''(v)}{\alpha(v)} x^2 dv\right)+dx^2,
		\end{equation}
		where $h(u,v,x)=\alpha(v)$ is an arbitrary function with $\alpha''(v)\neq 0$.
		\item If $\Ric$ is $3$-step nilpotent then $(M,g)$ is a Kundt spacetime and there exist local coordinates $(u,v,x)$ so that $h(u,v,x)=v$ and
		\begin{equation}\label{eq:Kundt-solutions}
	g(u,v,x)=  dv( du+ F(u,v,x) dv+W(u,v,x) dx)+dx^2,
		\end{equation}
		where
		\[
		\begin{array}{rcl}
			F(u,v,x)&=&\frac{u^2}{x^2}+ \gamma_1(v,x) u+\gamma_0(v,x),\\
			\noalign{\smallskip}
			W(u,v,x)&=&-\frac{2u}{x},
		\end{array}
		\]
		with $\gamma_1(v,x)=\alpha_1(v)-\frac{2 \log (x)}{v}$ and 
		\[
		\gamma_0(v,x)=\frac{x^2 ((\log (x)-2) \log (x)+2)}{v^2}+\frac{x^2 \alpha_1(v) (1-\log (x))}{v}+x^2 \alpha_2(v)+x \alpha_3(v),
		\]
		for arbitrary functions $\alpha_1$, $\alpha_2$ and $\alpha_3$.
	\end{enumerate}
\end{theorem}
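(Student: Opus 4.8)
The plan is to run the trichotomy of Theorem~\ref{th:main} and then exploit the rigidity of three-dimensional geometry, where the Weyl tensor vanishes so that Ricci-flat is equivalent to flat. Under this equivalence the non-flat hypothesis rules out alternative~(1) of Theorem~\ref{th:main}, leaving exactly two cases: either $\Ric$ is $2$-step nilpotent and $(M,g)$ is a Brinkmann wave, or $\Ric$ is $3$-step nilpotent and $(M,g)$ is a Kundt spacetime. Throughout I would use that $\tau=0$ for isotropic solutions, and I would record the auxiliary relation obtained by contracting \eqref{eq:vacuum-Einstein-field-equations} with $\nabla h$: since $\nabla h$ is lightlike one has $\Hes_h(\nabla h,\cdot)=\tfrac12 d|\nabla h|^2=0$, whence $h\,\iota_{\nh}\rho=-(\Delta h+\Lambda)\,dh$. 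This relation controls the alignment of $\nabla h$ with the preferred null direction in each case.

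For the $2$-step case, first I would introduce Brinkmann coordinates $(u,v,x)$ in which $\partial_u$ is the parallel null vector; removing the single cross term by a shift $u\mapsto u+\phi(v,x)$ (possible since the transverse space is one-dimensional) reduces the metric to $g=2\,du\,dv+H(v,x)\,dv^2+dx^2$, for which $\rho=-\tfrac12 H_{xx}\,dv^2$. Substituting into \eqref{eq:vacuum-Einstein-field-equations} componentwise, the $(u,u)$, $(u,x)$, $(u,v)$ and $(x,x)$ equations force $h_{uu}=h_{ux}=0$ and $h_{uv}=h_{xx}=\Delta h+\Lambda$. Feeding this into the $du$- and $dx$-parts of the auxiliary relation, together with the null condition $-Hh_u^2+2h_uh_v+h_x^2=0$ and the exclusion of the branch where $H$ becomes affine in $x$ (hence Ricci-flat, hence flat), I expect to conclude $\Lambda=0$, $\Delta h=0$ and $h=h(v)$. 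The remaining $(v,v)$ equation then reads $H_{xx}=-2h''/h$, whose right-hand side depends on $v$ alone; therefore $H$ is necessarily quadratic in $x$, which is exactly the assertion that the Brinkmann wave is a plane wave. A final shift in $u$ absorbs the terms of $H$ that are linear and constant in $x$, and writing $\alpha=h$ produces \eqref{eq:plane-wave-3dim-solutions}, with $\alpha''\neq0$ encoding non-flatness.

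For the $3$-step case I would fix Kundt coordinates in which $g=dv\,(du+F(u,v,x)\,dv+W(u,v,x)\,dx)+dx^2$ and, since $\nabla h$ is lightlike with $dh\neq0$, take $v=h$ as the null coordinate, so $h=v$. Then $\Hes_h=\nabla\,dv$ has components $-\Gamma^v_{ab}$, and \eqref{eq:vacuum-Einstein-field-equations} becomes a coupled PDE system for $F$ and $W$, the transverse metric already being $dx^2$. I would first use the off-diagonal and $(x,x)$ equations to pin down $W$ and the leading $u$-dependence of $F$, obtaining $W=-2u/x$ and the $u^2/x^2$ term of $F$, and then integrate the surviving equations in $x$.

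The hard part will be this last integration. Once the algebraic constraints are imposed, the remaining equations are Euler-type ODEs in $x$ whose homogeneous and particular solutions generate the logarithmic profile recorded in $\gamma_1$ and $\gamma_0$, while $\alpha_1,\alpha_2,\alpha_3$ enter as $v$-dependent integration functions. It is the compatibility among the different components of \eqref{eq:vacuum-Einstein-field-equations}, rather than any single integration, that fixes the precise combinations $(\log x-2)\log x+2$ and $\alpha_1(v)(1-\log x)$ appearing in \eqref{eq:Kundt-solutions}; keeping track of these cross-constraints is the main obstacle of the proof.
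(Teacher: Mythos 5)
Your overall strategy coincides with the paper's: run the trichotomy of Theorem~\ref{th:main}, use the vanishing of the Weyl tensor in dimension three to discard the Ricci-flat case, and then integrate $G^h=0$ componentwise in adapted Brinkmann/Kundt coordinates after normalizing $h$. However, there is one genuine gap in your treatment of case (1). You begin by ``introducing Brinkmann coordinates in which $\partial_u$ is the parallel null vector'' and immediately write the metric as $g=2\,du\,dv+H(v,x)\,dv^2+dx^2$ with $H$ independent of $u$. A Brinkmann wave only comes equipped with a \emph{recurrent} lightlike vector field (a parallel null line field); in the general local form \eqref{eq:3-dim-Brinkmann-wave} the profile $F$ may depend on $u$, and $\partial_uF=0$ is exactly the statement that the recurrent field can be rescaled to a parallel one, i.e.\ that the spacetime is a $pp$-wave. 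That passage is not automatic: it is precisely where the $2$-step nilpotency of $\Ric$ enters (the paper invokes \cite{leistner} for it, and alternatively verifies it by showing $\partial_xF_1=0$ for isotropic solutions in general Brinkmann coordinates). Without this step your $(v,v)$ equation $H_{xx}=-2h''/h$ is not yet available, since $H$ could a priori carry $u$-dependence. You must either cite the recurrent-to-parallel result or derive $\partial_uF=0$ directly from $G^h=0$.

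A secondary point on case (2): the exact normal form $W=-2u/x$ and the $u^2/x^2$ coefficient of $F$ do not follow from the field equations alone. Integrating $\Ric(\partial_u)=0$ and $\tau=0$ yields $W=\omega_1(v,x)u+\omega_0(v,x)$ with $\omega_1=-2/(x+\varphi(v))$, and the integration functions $\varphi(v)$ and $\omega_0(v,x)$ are removed only by explicit coordinate changes $x\mapsto x+\varphi(v)$ and $u\mapsto u+\psi(v,x)$ (the content of Lemma~\ref{le:3-kundt}). Your phrase ``pin down $W$ from the off-diagonal and $(x,x)$ equations'' conflates a gauge-fixing step with an integration step; once this is made explicit, the remaining Euler-type integrations in $x$ proceed as you describe and produce the logarithmic profiles of $\gamma_1$ and $\gamma_0$.
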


\subsection{Outline of the paper} In what follows we  will analyze the weighted Einstein field equation \eqref{eq:vacuum-Einstein-field-equations}, mainly focusing on the underlying geometric structure of isotropic solutions. We will  show that solutions are characterized by the presence of a distinguished lightlike vector  field, so we  begin by recalling some definitions of spacetimes with this property in Section~\ref{sect:distinguish-lightlike-vector-field}. In Section~\ref{sect:general-dim} we obtain the first geometric consequences of   equation \eqref{eq:vacuum-Einstein-field-equations} and prove Theorem~\ref{th:main}. Afterwards, in Section~\ref{sect:3-dim} we restrict the context to dimension three  to classify solutions on $pp$-waves, provide some illustrative examples, and prove Theorem~\ref{th:3-dim}. Finally, in Section~\ref{sect:4-dim-examples} we provide some remarks on 4-dimensional spacetimes: we prove that 4-dimensional Ricci-flat isotropic solutions are $pp$-waves; show  that the classification result in three dimensions does not extend to four dimensions by giving an appropriate example; and build Ricci-flat $4$-dimensional warped products from the solutions given in Section~\ref{sect:3-dim}.

\section{Families of spacetimes with distinguished lightlike vector field}\label{sect:distinguish-lightlike-vector-field}

When considering the vacuum weighted Einstein equation, several families characterized by the presence of a distinguished lightlike vector field  play a pivotal role. In this section we recall some definitions and basic facts about those that will appear in the subsequent analysis. 

\subsection{Kundt spacetimes}
Kundt spacetimes are interesting both from a geometrical and a physical point of view. Due to their holonomy structure, Kundt spacetimes appear in a number of physical situations. We refer to \cite{Kundt-spacetimes} for a detailed description of their geometry and to \cite{Br-Co-Her} for relations with supersymmetric solutions of supergravity theories and their role in string theory. 

We first work in arbitrary dimension $n\geq 3$. For a lightlike vector field $V$, the {\it optical scalars} of {\it expansion}, {\it shear} and {\it twist} are given, respectively, by
\begin{equation}\label{eq:optical-scalars}
	\theta=\frac{1}{n-2}\nabla_iV^i, \qquad \sigma^2=(\nabla^i V^j) \nabla_{(i} V_{j)}-(n-2)\theta^2, \qquad \omega^2=(\nabla^iV^j)\nabla_{[i}V_{j]},
\end{equation} 
where parentheses denote symmetrization and brackets denote anti-symmetrization when placed in the subindices. {\it Kundt spacetimes} are characterized by a lightlike geodesic vector field with zero optical scalars,  which means that it is expansion-free, shear-free and twist-free (see
\cite{chow2010kundt,Kundt-spacetimes,podolsky2009general}).  We also refer to \cite{MBZ} for an alternative characterization.

For an $n$-dimensional Kundt spacetime, the metric can be written in appropriate local coordinates $(u,v,x_1,\dots, x_{n-2})$  as \cite{Kundt-spacetimes,podolsky2009general} 
\begin{equation}\label{eq:local-coord-kundt-xeral}
	g=dv\left(2du+F(u,v,x)dv+\sum_{i=1}^{n-2}W_{x_i}(u,v,x)dx_i\right)+\sum_{i,j=1}^{n-2}g_{ij}(v,x)dx_idx_j,
\end{equation}
where $F$, $W_{x_i}$ and $g_{ij}$ are functions of the specified coordinates. 

In dimension three, the geometry of Kundt spacetimes is more rigid than in higher dimensions. Thus, the presence of an expansion-free lightlike geodesic vector field guarantees that the spacetime is Kundt, i.e. the vector field automatically has vanishing optical scalars \cite{chow2010kundt}. In this case, the expression \eqref{eq:local-coord-kundt-xeral} can be further normalized so that $g_{11}=1$. Thus, the metric can be written in local coordinates $(u,v,x)$ as 
\begin{equation}\label{eq:3-dim-kundt}
g(u,v,x)=  dv(2 du+ F(u,v,x) dv+W(u,v,x) dx)+dx^2.
\end{equation}

\subsection{Brinkmann waves}
A more specific situation appears when on a Kundt spacetime the distinguished lightlike geodesic vector field $V$ is recurrent, i.e. $ \nabla_X V=\omega (X)\otimes V$, for a $1$-form $\omega$. A spacetime admitting a parallel lightlike line field is said to be a {\it Brinkmann wave}. In general, if the tangent bundle admits an orthogonal direct sum decomposition into non-degenerate subspaces which are invariant under the holonomy representation, then the manifold splits as a product \cite{Wu}. However, if the holonomy representation admits an invariant subspace where the metric is degenerate and there are no proper non-degenerate invariant subspaces, then the holonomy group acts indecomposably (not irreducibly). In this case there is not such a splitting and Brinkmann waves illustrate these phenomena in Lorentzian geometry. 

Local coordinates given for Kundt spacetimes in \eqref{eq:3-dim-kundt} can be further specialized for Brinkmann waves. Thus the metric of a $3$-dimensional Brinkmann wave can be written as
\begin{equation}\label{eq:3-dim-Brinkmann-wave}
	g(u,v,x)=dv\left(2du+F(u,v,x)dv\right)+dx^2,
\end{equation}
where $V=\partial_u$ is lightlike and recurrent. Moreover, if this vector field can be rescaled to a parallel one, then $\partial_uF=0$ (see, for example, \cite{leistner}). 

\subsection{$pp$-waves and plane waves}
A special family of Brinkmann waves is that of the so-called $pp$-waves. These spacetimes appear in a number of special situations in General Relativity and, in particular, as solutions of the Einstein equations (we refer to \cite{stephani-et-al} for further details). In arbitrary dimension, $pp$-waves are Brinkmann waves which admit a parallel vector field $V$ such that $R(V^\perp,V^\perp)=0$. When particularizing to dimension three, however, the fact that $V$ is recurrent ensures the condition $R(V^\perp,V^\perp)=0$. Hence, all $3$-dimensional Brinkmann waves with parallel vector field $V$ are $pp$-waves. Thus, local special coordinates as in \eqref{eq:3-dim-Brinkmann-wave} characterize $pp$-waves if $F$ is a function of $v$ and $x$.

A $pp$-wave with transversally parallel curvature tensor (i.e. such that $\nabla_{V^\perp} R=0$) is called a {\it plane wave}. Again, we refer to \cite{stephani-et-al} for examples of contexts where these spacetimes play a role, which are numerous. In local coordinates, the metric of $3$-dimensional plane waves can be given by \eqref{eq:3-dim-Brinkmann-wave} where $F(u,v,x)= \alpha(v) x^2$. Notice that, if $\alpha$ is constant, these metrics correspond to Cahen-Wallach symmetric spaces \cite{cahen-wallach}.

\section{The vacuum Einstein field equation in arbitrary dimension}\label{sect:general-dim}

We consider a smooth metric measure space $(M,g,h\,dvol_g)$ of dimension $n$ and
 begin by analyzing the vacuum Einstein field equation. Taking traces in  \eqref{eq:vacuum-Einstein-field-equations} we have
\begin{equation}\label{eq:trace-EinsteinEquation}
0= h\tau +(n-1) \Delta h + n \Lambda, 
\end{equation}
so $\Delta h$ can be given in terms of $h$, $\tau$ and $\Lambda$ as $\Delta h=-\frac{h\tau+n \Lambda}{n-1}$. 
The following result shows that, for isotropic solutions, $\nh$ is geodesic and an eigenvector of the Ricci operator.

\begin{lemma}\label{le:geodesic-vector-field}
	Let $(M,g,h\,dvol_g)$ be an isotropic solution to the vacuum weighted Einstein field equation. Then $\nabla_{\nabla h} \nabla h=0$ and $\operatorname{Ric}(\nabla h)=\frac{h\tau+\Lambda}{(n-1)h}  \nabla h$.
\end{lemma}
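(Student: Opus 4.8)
The plan is to read both conclusions off the field equation \eqref{eq:vacuum-Einstein-field-equations} after rewriting it at the level of endomorphisms. Since $\rho(X,Y)=g(\Ric X,Y)$ and $\Hes_h(X,Y)=g(\h_h X,Y)$ with $\h_h X=\nabla_X\nh$, equation \eqref{eq:vacuum-Einstein-field-equations} is equivalent to the operator identity
\[
\h_h=h\,\Ric+(\Delta h+\Lambda)\,\Id .
\]
All the remaining work reduces to evaluating this on the distinguished field $\nh$.

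For the geodesic property I would argue that it follows from the isotropic hypothesis alone, without invoking the field equation. Indeed, $g(\nh,\nh)=0$ holds identically, so differentiating along an arbitrary vector field $X$ gives $0=X\bigl(g(\nh,\nh)\bigr)=2\,g(\nabla_X\nh,\nh)=2\,\Hes_h(X,\nh)$. Since the Hessian is symmetric, this reads $\Hes_h(\nh,X)=g(\nabla_{\nh}\nh,X)=0$ for every $X$, whence $\nabla_{\nh}\nh=0$. Equivalently, $\h_h(\nh)=0$.

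For the eigenvalue I would substitute $X=\nh$ into the operator identity above and use $\h_h(\nh)=0$ just obtained, getting $0=h\,\Ric(\nh)+(\Delta h+\Lambda)\,\nh$, that is $\Ric(\nh)=-\tfrac{\Delta h+\Lambda}{h}\,\nh$ (recall $h\neq 0$). It then remains to replace $\Delta h$ by its value from the trace relation \eqref{eq:trace-EinsteinEquation}, namely $\Delta h=-\tfrac{h\tau+n\Lambda}{n-1}$, which gives $\Delta h+\Lambda=-\tfrac{h\tau+\Lambda}{n-1}$ and hence $\Ric(\nh)=\tfrac{h\tau+\Lambda}{(n-1)h}\,\nh$, as claimed.

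There is no serious obstacle here: the only conceptual point is recognizing that the lightlike condition already forces $\nh$ to be geodesic through the symmetry of $\Hes_h$, after which the field equation pins down the Ricci eigenvalue in a single line. The sole care needed is the bookkeeping in passing between the tensorial form \eqref{eq:vacuum-Einstein-field-equations} and its operator form, together with the correct algebraic combination with the trace identity \eqref{eq:trace-EinsteinEquation}.
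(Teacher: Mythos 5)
Your proposal is correct and follows essentially the same route as the paper: deduce $\operatorname{hes}_h(\nabla h)=\nabla_{\nabla h}\nabla h=0$ by differentiating $g(\nabla h,\nabla h)=0$ and using the symmetry of the Hessian, then evaluate the field equation on $\nabla h$ and substitute $\Delta h=-\tfrac{h\tau+n\Lambda}{n-1}$ from the trace identity. The algebra giving $\Delta h+\Lambda=-\tfrac{h\tau+\Lambda}{n-1}$ checks out, so no further comment is needed.
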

\begin{proof}
	Since $g(\nabla h,\nabla h)=0$, we have
	\[
	0=(\nabla_Xg)(\nabla h,\nabla h)=-2 \Hes_h (\nabla h,X) \text{ for all vector fields } X.
	\]
	Hence $\operatorname{hes}_h(\nabla h)=\nabla_{\nabla h}\nabla h=0$ and, from equation \eqref{eq:vacuum-Einstein-field-equations}, $\operatorname{Ric}(\nabla h)=-\frac{\Delta h+\Lambda}{h} \nabla h=\frac{h\tau+\Lambda}{(n-1)h} \nabla h$.
\end{proof}

%
%

Let $\alpha=\frac{h\tau+\Lambda}{(n-1)h} $ be the eigenvalue of $\Ric$ associated to $\nabla h$. Since $\nabla h$ is lightlike and $\Ric(\nabla h)=\alpha \nabla h$, the Ricci operator has real eigenvalues. Moreover, since the Ricci operator is self-adjoint, there exists a pseudo-orthonormal basis $\mathcal{B}=\{\nh, U, E_1,\dots, E_{n-2}\}$ such that $g(\nh,U)=g(E_i,E_i)=1$ (other terms of $g$ being zero) and such that the Ricci operator satisfies $\Ric(\nabla h)=\alpha \nh$, $\Ric(U)=\nu \nh+\alpha U+\mu E_1$, $\Ric(E_1)=\mu \nh+\beta_1 E_1$ and $\Ric(E_i)=\beta_i E_i$ if $i\neq 1$ (see \cite{Oneill} for details).

In the next lemma we show that the Ricci operator is indeed nilpotent and, moreover, the constant $\Lambda$ and the Laplacian of $h$ vanish.

\begin{lemma}\label{le:Ricci-nilpotent}
	Let $(M,g,h\, dvol_g)$ be an isotropic solution of the vacuum weighted Einstein field equation. Then $\Ric$ is nilpotent, $\Delta h=0$ and $\Lambda=0$.
\end{lemma}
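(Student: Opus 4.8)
The plan is to put equation~\eqref{eq:vacuum-Einstein-field-equations} in operator form and then exploit that $\nh$ is lightlike to turn a second-order identity for $h$ into an algebraic constraint on the eigenvalues of $\Ric$. Writing $\operatorname{hes}_h$ for the Hessian operator and using the relation $\Delta h+\Lambda=-h\alpha$ from Lemma~\ref{le:geodesic-vector-field}, the field equation reads $\operatorname{hes}_h=h(\Ric-\alpha\,\Id)$. Since $g(\nh,\nh)\equiv 0$, the idea is that the left-hand side is severely constrained, and I will extract this constraint via a Bochner identity.

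First I would apply the Bochner formula $\tfrac12\Delta|\nh|^2=|\Hes_h|^2+g(\nh,\nabla\Delta h)+\rho(\nh,\nh)$. The left-hand side vanishes because $|\nh|^2\equiv 0$; the last term vanishes because $\rho(\nh,\nh)=\alpha\,g(\nh,\nh)=0$; and, since $\tau$ and $\Lambda$ are constant, the trace equation~\eqref{eq:trace-EinsteinEquation} gives $\nabla\Delta h=-\tfrac{\tau}{n-1}\nh$, so $g(\nh,\nabla\Delta h)=0$ as well. Hence $|\Hes_h|^2=0$. Rewriting this norm as the genuine trace $|\Hes_h|^2=\tr(\operatorname{hes}_h^2)=h^2\,\tr\big((\Ric-\alpha\,\Id)^2\big)$ and using $h\neq0$, I obtain $\tr\big((\Ric-\alpha\,\Id)^2\big)=0$.

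The crux is the evaluation of this trace in the pseudo-orthonormal frame $\mathcal B=\{\nh,U,E_1,\dots,E_{n-2}\}$. Here $(\Ric-\alpha\,\Id)$ annihilates $\nh$, sends $U\mapsto \nu\,\nh+\mu E_1$, $E_1\mapsto \mu\,\nh+(\beta_1-\alpha)E_1$ and $E_i\mapsto(\beta_i-\alpha)E_i$; the key point, and the step I expect to require the most care, is that when one squares this operator and takes the trace, the null off-diagonal entries $\nu,\mu$ drop out entirely, leaving $\tr\big((\Ric-\alpha\,\Id)^2\big)=\sum_{i=1}^{n-2}(\beta_i-\alpha)^2$. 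This is a sum of squares of real numbers, so its vanishing forces $\beta_i=\alpha$ for every $i$. Thus, despite the Lorentzian signature, the single scalar identity pins down all the diagonal eigenvalues.

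It then remains to read off the three conclusions. With $\beta_i=\alpha$ for all $i$ one has $\tau=\tr\Ric=n\alpha$; comparing with $\alpha=\frac{h\tau+\Lambda}{(n-1)h}$ gives $h\alpha=-\Lambda$, whence $\Delta h=-h\alpha-\Lambda=0$. Substituting $\Delta h=0$ back into~\eqref{eq:trace-EinsteinEquation} yields $h\tau=-n\Lambda$, and since $h$ is nowhere constant while $\tau$ and $\Lambda$ are constant, this is possible only if $\tau=0$ and $\Lambda=0$; consequently $\alpha=0$ and every eigenvalue vanishes. Finally, in the frame $\mathcal B$ the operator $\Ric$ now kills $\nh$ and each $E_i$ with $i\neq1$, sends $E_1\mapsto\mu\,\nh$ and $U\mapsto\nu\,\nh+\mu E_1$, so a direct computation gives $\Ric^3=0$, proving nilpotency.
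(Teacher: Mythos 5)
Your argument is correct, and it reaches the paper's conclusion by a genuinely different (though ultimately equivalent) scalar identity. The paper extracts its key constraint from the contracted second Bianchi identity, expanding $0=\diver\rho(\nabla h)$ term by term in the frame $\mathcal B$ to arrive at $\alpha\,\Delta h-\tr(\Ric\circ\operatorname{hes}_h)=0$; you instead apply the Bochner--Weitzenb\"ock formula to the identically vanishing function $|\nabla h|^2$ and obtain $\tr(\operatorname{hes}_h^2)=0$. Since the field equation gives $\operatorname{hes}_h=h(\Ric-\alpha\Id)$, your identity is exactly $h$ times the paper's, so the two routes converge on the same algebraic constraint; your derivation is arguably cleaner in that it avoids the three separate covariant-derivative computations of the paper and gets the scalar relation in one stroke from $g(\nabla h,\nabla h)\equiv 0$ (using, as you correctly note, the constancy of $\tau$ from Lemma~\ref{le:const-sc} to kill the term $g(\nabla h,\nabla\Delta h)$). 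You also handle the one genuinely delicate point correctly: in Lorentzian signature $\tr(\operatorname{hes}_h^2)=0$ does not force $\operatorname{hes}_h=0$, and your frame computation showing that the nilpotent off-diagonal entries $\mu,\nu$ cancel in $\tr\bigl((\Ric-\alpha\Id)^2\bigr)$, leaving the sum of squares $\sum_i(\beta_i-\alpha)^2$, is exactly what is needed; the paper reaches the same conclusion via $0=-\sum_i\gamma_i^2/h$ with $\gamma_i=h(\beta_i-\alpha)$. The endgame ($\beta_i=\alpha$, hence $\tau=n\alpha$, hence $\Delta h=0$ and $h\tau=-n\Lambda$, forcing $\tau=\Lambda=0$ by non-constancy of $h$, and finally $\Ric^3=0$) matches the paper.
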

\begin{proof}
	By Lemma~\ref{le:const-sc}, the scalar curvature  $\tau$ is constant. We use the contracted second Bianchi identity to see that $\diver \rho(\nabla h)=\frac12 d\tau(\nabla h)=0$. Hence we have
\begin{equation}\label{eq:divric-nablah}
0=\diver \rho(\nabla h)=(\nabla_{\nabla h}\rho)(U,\nabla h)+(\nabla_{U}\rho)(\nabla h,\nabla h)+\sum_i (\nabla_{E_i}\rho)(E_i,\nabla h).
\end{equation}
We compute each of these three terms separately. Note that, since $\alpha=\frac{h\tau+\Lambda}{(n-1)h}$, we have $\nabla h(\alpha)=0$. Also, since $\nabla_{\nabla h} \nabla h=0$ and $\rho(\nabla_{\nabla h} U,\nabla h)=\alpha g(\nabla_{\nabla h} U,\nabla h)=\alpha\{\nabla h g(U,\nabla h)-g(U,\nabla_{\nabla h}\nabla h)\}=0$, we have
\[
(\nabla_{\nabla h}\rho)(U,\nabla h)=\nabla h(\rho(U,\nabla h))-\rho(\nabla_{\nabla h} U,\nabla h)-\rho(U,\nabla_{\nabla h}\nabla h)=\nabla h(\alpha)=0.
\]
Since $\rho(\nabla h,\nabla h)=0$, we see that
\[
(\nabla_U\rho)(\nabla h,\nabla h)=U(\rho(\nabla h,\nabla h))-2\rho(\nabla_U\nabla h,\nabla h)=-2\alpha g(\nabla_{\nabla h}\nabla h,U)=0.
\]
Now, since $\rho(E_i,\nabla h)=0$ for all $i$, since $\sum_i \rho(\nabla_{E_i} E_i,\nh)=-\alpha \Delta h$, and since $\sum_i \rho(E_i, \nabla_{E_i}\nh)=\tr (\Ric \circ \operatorname{hes}_h)$,  we obtain
\[
\begin{array}{rcl}
\displaystyle\sum_i (\nabla_{E_i}\rho)(E_i,\nabla h)&=& \displaystyle\sum_i \{E_i \rho(E_i,\nh)-\rho (\nabla_{E_i} E_i,\nh)-\rho(E_i, \nabla_{E_i}\nh)\}\\
\noalign{\medskip}
&=& \alpha \Delta h- \tr (\Ric \circ \operatorname{hes}_h).
\end{array}
\]
Hence, from \eqref{eq:divric-nablah} we obtain
that 
\begin{equation}\label{eq:relation-tr}
\alpha \Delta h- \tr (\Ric \circ \operatorname{hes}_h)=0.
\end{equation}
Now we set $\operatorname{hes}_h(E_1)=\star \nh+\gamma_1 E_1$ and $\operatorname{hes}_h(E_i)=\gamma_i E_i$ for $i\geq 2$. From   \eqref{eq:vacuum-Einstein-field-equations} we have
\[
\begin{array}{l}
0=G^h(\nh,U)=h\alpha+\Delta h+\Lambda,\\
0=G^h(E_i,E_i)=h \beta_i-\gamma_i+ \Delta h+\Lambda,
\end{array}
\]
so $\gamma_i=h(\beta_i-\alpha)$. Hence, from equation \eqref{eq:relation-tr} we have
\[
0=\alpha \sum_i \gamma_i-\sum_i \beta_i \gamma_i= \sum_i \gamma_i (\alpha-\beta_i)=- \sum_i \frac{\gamma_i^2}{h}. 
\]
This implies $\gamma_i=0$ for all $i$, and therefore $\Delta h=0$. Moreover, $\beta_i=\alpha$ for all $i$. Now, from \eqref{eq:trace-EinsteinEquation} we get that $h\tau+n\Lambda=0$. Since $\tau$ and $\Lambda$ are constant, but $h$ is not, we conclude $\tau=\Lambda=0$. Furthermore,  $\beta_i=\frac{h\tau+\Lambda}{(n-1)h}=0$ and $\Ric$ is nilpotent.
\end{proof}

\begin{remark}\label{re:non-isotropic}
Due to Lemma~\ref{le:Ricci-nilpotent}, if a solution to the vacuum weighted Einstein field equation is isotropic, then $\Lambda=0$. This implication does not hold if $\nabla h$ is not lightlike.

If we consider an $n$-dimensional Einstein manifold $(M,g)$, with $\rho=\frac{\tau}n g$, that satisfies equation \eqref{eq:vacuum-Einstein-field-equations}, then
\[
\Hes_h=\left(\frac{h\tau}n+\Delta h+\Lambda\right)g.
\]
Notice that solutions to this equation are necessarily solutions of the local M\"obius equation $\Hes_h=\frac{\Delta h}{n}g$ (see \cite{Osgood-Stowe,Xu}), which provide conformal changes of Einstein metrics that are also Einstein. We refer to \cite{Kuhnel-Rademacher} for a survey of this topic in pseudo-Riemannian geometry. Also, the local M\"obius equation was applied to give the warped product structure of a Schwarzschild space-time in \cite{Manolo-Eduardo-Kupeli}.

For illustrative purposes, since $3$-dimensional Einstein manifolds have constant sectional curvature, one can solve the local M\"obius equation on the de Sitter and the Anti-de-Sitter spacetimes of dimension three to provide simple examples of solutions of \eqref{eq:vacuum-Einstein-field-equations} with $\Lambda\neq 0$ as follows:
\begin{enumerate}
\item We consider de Sitter space with coordinates $(x,y,z)$ and metric 
\[
g_{dS}= \kappa^2 \left(-\cos^2 y dx^2+ dy^2+ \sin^2 y dz^2\right).
\]
The scalar curvature is given by $\tau=\frac{6}{\kappa^2}$.
A direct calculation shows that a function of the form $h(x,y,z)=-\frac{\kappa^2
	\Lambda }{2}+\sin (y) (c_1 \cos (z)+c_2 \sin (z))$ gives  solutions to the vacuum weighted Einstein equation for constants $c_1$, $c_2$. Since $\|\nabla h\|^2=\frac{1}{\kappa ^2}\left(\cos ^2(y) \left(c_2 \sin (z)+c_1 \cos (z)\right){}^2+\left(c_2 \cos (z)-c_1 \sin (z)\right){}^2\right)\geq 0$, the gradient of $h$ is spacelike or lightlike.
In conclusion, there exist local solutions of \eqref{eq:vacuum-Einstein-field-equations} for arbitrary $\Lambda$.

Moreover, notice that a conformal change of the form $h^{-2} g_{dS}$ corresponds to a constant sectional curvature metric with scalar curvature $\tau=\frac{3}{2 \kappa ^2} \left(\kappa ^4 \Lambda ^2-4 c_1^2-4
	c_2^2\right)$.
\item We consider the Anti-de Sitter space with coordinates $(x,y,z)$ and metric
\[
g_{AdS}= \kappa^2 \left(-\cosh^2 y dx^2+ dy^2+ \sinh^2 y dz^2\right).
\]
The scalar curvature is given by $\tau=-\frac{6}{\kappa^2}$. Functions of the form $h(x,y,z)=\frac{\kappa ^2 \Lambda }{2}+\sinh (y) (c_1 \cos (z)+c_2
\sin (z))$ provide solutions to \eqref{eq:vacuum-Einstein-field-equations} for constants $c_1$ and $c_2$. Note that the gradient of $h$ is always spacelike, since $\|\nh\|^2=\frac{1}{\kappa ^2}\left(\cosh ^2(y) \left(c_2 \sin (z)+c_1 \cos (z)\right){}^2\right.$ $\left.+\left(c_2 \cos (z)-c_1 \sin (z)\right){}^2\right)>0$. Therefore, there are solutions with spacelike $\nh$ for arbitrary $\Lambda$.

Moreover, the conformal metric $h^{-2} g_{AdS}$ corresponds again to Anti-de Sitter space with negative scalar curvature $\tau=-\frac{3}{2 \kappa ^2} \left(\kappa ^4 \Lambda ^2+4 c_1^2+4
	c_2^2\right)$.
\end{enumerate}
\end{remark}

Now, we continue the analysis of isotropic solutions to the vacuum weighted Einstein field equation. As a consequence of Lemma~\ref{le:Ricci-nilpotent} we have that $\tau=0$, $\Delta h=0$ and $\Lambda=0$, so equation \eqref{eq:vacuum-Einstein-field-equations} reduces to
\begin{equation}\label{eq:qE-mu1-equation}
h\rho=\Hes_h.
\end{equation}
Notice that this equation is linear in the function $h$. A more general version of \eqref{eq:qE-mu1-equation} was considered in \cite{BV-GR-G-VR} for affine manifolds.

\medspace

{\it Proof of Theorem~\ref{th:main}.}
We keep working in the pseudo-orthonormal basis $\mathcal{B}$ where, as a consequence of Lemma~\ref{le:Ricci-nilpotent}, the Ricci operator acts as follows:
\[
\begin{array}{l}
\Ric(\nabla h)=\Ric(E_i)=0, \text{ for } i=2,\dots, n-2,\\
\noalign{\medskip} 
\Ric(U)=\nu \nh+\mu E_1,\qquad \Ric(E_1)=\mu \nh.
\end{array}
\]
We distinguish three cases: $\Ric$ is zero ($\mu=\nu=0$), $\Ric$ is $2$-step nilpotent  ($\nu\neq0$ and $\mu= 0$) and $\Ric$ is $3$-step nilpotent ($\mu\neq 0$).

If the manifold is Ricci-flat, $\mu=\nu=0$, then  equation \eqref{eq:qE-mu1-equation} reduces to $\Hes_h=0$. Hence $\nabla h$ is a parallel vector field, so the manifold is a Ricci-flat Brinkmann wave with parallel vector field $\nh$. This proves Theorem~\ref{th:main}~(1).

If $\nu\neq0$ and $\mu= 0$, then the Ricci operator and, by \eqref{eq:qE-mu1-equation}, the Hessian operator are $2$-step nilpotent. We have $\nabla_{\nh} \nh=\nabla_{E_i}\nh=0$ for all $i=1,\dots,n-2$, while $\nabla_U \nh=h\nu \nh$, so $\nh$ is a lightlike recurrent vector field and the manifold is a Brinkmann wave. Theorem~\ref{th:main}~(2) follows.

If $\mu\neq 0$, then the Ricci and the Hessian operator are $3$-step nilpotent. We already know, by Lemma~\ref{le:geodesic-vector-field}, that the lightlike vector field $\nh$ is geodesic. We analyze the optical scalars \eqref{eq:optical-scalars} for $\nh$. Because $\nh$ is a gradient, it is twist-free ($\omega^2=0$). Moreover, we check that
\[
\theta=\frac{1}{n-2}\nabla_iV^i=\frac{1}{n-2}\Delta h=0,
\]
as a consequence of Lemma~\ref{le:Ricci-nilpotent}. Since $\operatorname{hes}_h$ is nilpotent and $\theta=0$, $\nh$ is also shear-free:
\[
\sigma^2=||\Hes_h||^2-(n-2)\theta^2=0.
\]
Hence, $\nh$ is a lightlike geodesic vector field with vanishing optical scalars, so we conclude that $(M,g)$ is a Kundt spacetime. This proves Theorem~\ref{th:main}~(3).\qed

The Ricci tensor of a warped product of the form $N\times_f I$, where $N$ is $n$-dimensional and $I\subset \mathbb{R}$ is a real interval, is given by \cite{Oneill}:
	\[
	\rho(X,Y)=\rho^N(X,Y) -\frac{1}f \operatorname{Hes}_f (X,Y),\quad \rho(X,\partial_t)=0,\quad \rho(\partial_t,\partial_t)=-\Delta f f,
	\]
	where $X,Y$ are vector fields tangent to $N$, $t$ is a coordinate parameterizing $I$ by arc length, and $\rho^N$ is the Ricci tensor of $N$. 	 Necessary and sufficient conditions for a  warped product $N\times_f I$ to be Einstein follow:
	\begin{eqnarray}
		&&\rho^N -\frac{1}f \operatorname{Hes}_f =\lambda g^N,\label{eq:warped-Einstein-condition-base}\\ 
		&& -\Delta f= \lambda f,\label{eq:warped-Einstein-condition-fiber}
	\end{eqnarray}
	where $\lambda$ is constant.
	By replacing $\lambda$ in equation \eqref{eq:warped-Einstein-condition-base} one gets $f\rho^N - \operatorname{Hes}_f + \Delta f g^N =0$, which corresponds to equation \eqref{eq:vacuum-Einstein-field-equations} with $\Lambda=0$. Thus, for any Einstein warped product $N\times_f I$, the smooth metric measure space $(N,g^N, f\, dvol_g)$ is a solution of the vacuum weighted Einstein field equation \eqref{eq:vacuum-Einstein-field-equations} with $\Lambda=0$.
	
	As a consequence of the results in Section~\ref{sect:general-dim}, isotropic solutions to the vacuum weighted Einstein field equation satisfy $\Delta h=0$ and $\Lambda=0$. Hence we obtain the following consequence.
\begin{corollary}\label{cor:4-warped}
	A smooth metric measure space $(N,g,h\, dvol_g)$ with isotropic density $h$ is a solution to the vacuum weighted Einstein field equation \eqref{eq:vacuum-Einstein-field-equations} if and only if $N\times_h \mathbb{R}$ is Einstein. 	Furthermore, in this case $N\times_h \mathbb{R}$ is Ricci-flat.
\end{corollary}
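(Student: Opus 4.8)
The plan is to exploit the warped product Einstein conditions \eqref{eq:warped-Einstein-condition-base} and \eqref{eq:warped-Einstein-condition-fiber} established in the paragraph just above the statement, which characterize when $N\times_h\mathbb{R}$ is Einstein with some constant $\lambda$, and to combine them with Lemma~\ref{le:Ricci-nilpotent}, which pins down the behaviour of isotropic solutions. Since the assertion is an equivalence under the standing hypothesis that $h$ is isotropic, I would prove the two implications separately.

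For the implication that an Einstein structure on $N\times_h\mathbb{R}$ yields a solution, I would simply invoke the computation already carried out: eliminating $\lambda$ between \eqref{eq:warped-Einstein-condition-base} and \eqref{eq:warped-Einstein-condition-fiber} (using $h\neq 0$) returns $h\rho^N-\Hes_h+\Delta h\,g^N=0$, which is exactly \eqref{eq:vacuum-Einstein-field-equations} with $\Lambda=0$. This direction in fact requires no hypothesis on the causal character of $\nabla h$.

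The substance of the corollary lies in the converse together with the Ricci-flatness. Here I would feed isotropy into Lemma~\ref{le:Ricci-nilpotent} to obtain $\Delta h=0$ and $\Lambda=0$, so that \eqref{eq:vacuum-Einstein-field-equations} collapses to $h\rho=\Hes_h$, that is, $\rho^N-\frac1h\Hes_h=0$. Reading this as \eqref{eq:warped-Einstein-condition-base} with $\lambda=0$, and reading the identity $\Delta h=0$ as \eqref{eq:warped-Einstein-condition-fiber} with $\lambda=0$, both warped product Einstein conditions hold with vanishing constant. Checking the Ricci tensor of $N\times_h\mathbb{R}$ componentwise against the displayed warped product formulas — the base block gives $h\rho^N=\Hes_h$, the mixed block vanishes identically, and the fiber block gives $\rho(\partial_t,\partial_t)=-h\Delta h=0$ — confirms that $\Ric$ vanishes. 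Hence $N\times_h\mathbb{R}$ is not merely Einstein but Ricci-flat, which is the final claim.

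The only conceptual point worth isolating, rather than a genuine obstacle, is why isotropy is indispensable in the converse: to promote a solution of \eqref{eq:vacuum-Einstein-field-equations} to an Einstein warped product one needs the prospective Einstein constant $\lambda=-\Delta h/h$ to be an honest constant, and isotropy forces $\Delta h=0$, whence $\lambda=0$, via Lemma~\ref{le:Ricci-nilpotent}. For non-isotropic $h$ this rigidity fails, as the de Sitter and Anti-de Sitter examples of Remark~\ref{re:non-isotropic} illustrate, so both the equivalence and the Ricci-flat conclusion are special to the lightlike case.
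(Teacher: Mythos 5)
Your proposal is correct and follows essentially the same route as the paper: the forward implication is the elimination of $\lambda$ between \eqref{eq:warped-Einstein-condition-base} and \eqref{eq:warped-Einstein-condition-fiber} carried out in the paragraph preceding the corollary, and the converse plus the Ricci-flatness both rest on $\Delta h=0$ and $\Lambda=0$ from Lemma~\ref{le:Ricci-nilpotent}, forcing the Einstein constant to vanish. Your closing remark on why isotropy is indispensable is a useful clarification but matches the paper's intent.
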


\section{The vacuum Einstein field equation in dimension three}\label{sect:3-dim}


\subsection{$pp$-waves}
We begin this section by classifying solutions to the vacuum Einstein field equation with the underlying structure of a $pp$-wave. 

\begin{theorem}\label{th:3-dim-pp-waves}
	Let $(M,g)$ be a $3$-dimensional $pp$-wave. If $(M,g,h\, dvol_g)$ is a non-flat solution of \eqref{eq:vacuum-Einstein-field-equations}, then $\Lambda=0$ and one of the following possibilities holds:
	\begin{enumerate}
		\item $\nabla h$ is lightlike and $(M,g)$ is a plane wave which in local coordinates can be written as
		\[
		g(u,v,x)=dv\left(2du-\frac{\alpha''(v)}{\alpha(v)} x^2 dv\right)+dx^2
		\]
		where $h(u,v,x)=\alpha(v)$ is an arbitrary function with $\alpha''(v)\neq 0$.
		\item $\nabla h$ is spacelike and $(M,g)$ can be written in local coordinates as in \eqref{eq:3-dim-Brinkmann-wave} with
		\[
		F(v,x)=\frac{\left(\gamma_{1} \alpha(v)+2 \gamma_{0}(v) \gamma_{0}''(v)\right) \log(\gamma_{0}(v)+\gamma_{1} x)}{\gamma_{1}^2}-\frac{2 x
			\gamma_{0}''(v)}{\gamma_{1}}+\beta(v),
		\]
		where  $h(u,v,x)=\gamma_{1} x+\gamma_{0}(v)$, $\gamma_1\in\mathbb{R}\backslash\{0\}$, and $\gamma_0$, $\alpha$, $\beta$ are arbitrary functions such that $\gamma_{1} \alpha(v)+2 \gamma_{0}(v) \gamma_{0}''(v)\neq 0$.
	\end{enumerate}
\end{theorem}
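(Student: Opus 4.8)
The plan is to impose the $pp$-wave structure directly on \eqref{eq:vacuum-Einstein-field-equations} and solve the resulting system for $h$ and $F$. Writing the metric in the coordinates \eqref{eq:3-dim-Brinkmann-wave} with $F=F(v,x)$, I would first record the curvature of a $3$-dimensional $pp$-wave: a short computation of the Christoffel symbols (the only nonvanishing ones being $\Gamma^u_{vv}=\tfrac12 F_v$, $\Gamma^u_{vx}=\tfrac12 F_x$ and $\Gamma^x_{vv}=-\tfrac12 F_x$) shows that the Ricci tensor reduces to $\rho=-\tfrac12 F_{xx}\,dv^2$, so that $\tau=0$ and $\Ric$ is nilpotent. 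In dimension three the full curvature is determined by $\rho$, so non-flatness of $(M,g)$ is equivalent to $F_{xx}\not\equiv 0$. In particular the trace relation \eqref{eq:trace-EinsteinEquation} becomes $2\Delta h+3\Lambda=0$, whence $\Delta h=-\tfrac32\Lambda$ is constant.

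Next I would compute $\Hes_h$ in these coordinates and read off \eqref{eq:vacuum-Einstein-field-equations} component by component in the frame $\{\partial_u,\partial_v,\partial_x\}$. The components in which neither $\rho$ nor $g$ contribute, namely $(u,u)$, $(u,x)$ and $(v,x)$, give $h_{uu}=0$, $h_{ux}=0$ and $h_{vx}=\tfrac12 F_x\, h_u$, while the $(u,v)$ and $(x,x)$ components give $h_{uv}=h_{xx}=\Delta h+\Lambda$. Combining these with the expression for the trace, $\Delta h=-F\,h_{uu}+2h_{uv}+h_{xx}=2h_{uv}+h_{xx}$, forces $h_{uv}=h_{xx}=-\tfrac12\Lambda$. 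From $h_{uu}=h_{ux}=0$ the function $h_u$ depends on $v$ alone, and $h_{uv}=-\tfrac12\Lambda$ then gives $h_u=-\tfrac12\Lambda\, v+C_0$; moreover $h_{xx}=-\tfrac12\Lambda$ makes $h$ quadratic in $x$ with $x$-independent coefficients, so $h_{vx}$ is a function of $v$ only.

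The crux is to show $\Lambda=0$. The key observation is that wherever $h_u\neq0$ the relation $h_{vx}=\tfrac12 F_x\, h_u$ expresses $F_x$ as a quotient of two functions of $v$, so $F_x$, and hence $F_{xx}$, would be independent of $x$; but $F_{xx}\equiv 0$ makes the $pp$-wave Ricci-flat, hence flat, contrary to hypothesis. Therefore $h_u\equiv 0$, which through $h_u=-\tfrac12\Lambda\, v+C_0$ forces $\Lambda=0$ and $C_0=0$. With $\Lambda=0$ we obtain $h_{xx}=0$ and $h_{vx}=0$, so $h=\gamma_1 x+\gamma_0(v)$ with $\gamma_1$ a constant. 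Since $\|\nh\|^2=\gamma_1^2$, this yields exactly the stated dichotomy: $\gamma_1=0$ gives lightlike $\nh$ (case (1)), and $\gamma_1\neq 0$ gives spacelike $\nh$ (case (2)).

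Finally I would solve the one remaining equation, the $(v,v)$-component, which after clearing signs reads $h\,F_{xx}+2h_{vv}+F_x\,h_x=0$. In case (1), $h=\alpha(v)$ reduces this to $F_{xx}=-2\alpha''/\alpha$; integrating twice in $x$ and absorbing the terms linear and constant in $x$ by a coordinate change preserving both the $pp$-wave form and $h$, one reaches $F=-\tfrac{\alpha''}{\alpha}x^2$, with $\alpha''\neq0$ imposed by non-flatness, giving \eqref{eq:plane-wave-3dim-solutions}; the resulting $F=\alpha(v)x^2$ form satisfies $\nabla_{V^\perp}R=0$, identifying it as a plane wave. In case (2), $h=\gamma_1 x+\gamma_0(v)$ turns the equation into the linear ODE $(\gamma_1 x+\gamma_0)F_{xx}+\gamma_1 F_x=-2\gamma_0''$, that is $\partial_x\big((\gamma_1 x+\gamma_0)F_x\big)=-2\gamma_0''$; integrating once and then again in $x$ produces the logarithmic expression for $F$, with the integration function of $v$ renamed $\alpha(v)$, and the condition $\gamma_1\alpha+2\gamma_0\gamma_0''\neq 0$ (so that the logarithmic term survives and $F_{xx}\not\equiv 0$) is precisely non-flatness. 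The main obstacle I anticipate is organizing the elimination cleanly: keeping careful track of which integration ``constants'' are actually functions of $v$, carrying out the coordinate normalization in case (1) without disturbing $h$, and integrating the ODE in case (2) so that the output matches the stated closed form for $F$.
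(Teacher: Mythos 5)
Your proposal is correct and follows essentially the same route as the paper's proof: both extract $h_{uu}=h_{ux}=0$ and $h_{uv}=h_{xx}=-\tfrac12\Lambda$ from the easy components of $G^h$, use the $(v,x)$-component together with non-flatness ($\partial_x^2F\not\equiv 0$) to force $\partial_u h\equiv 0$ and hence $\Lambda=0$, and then integrate the remaining $(v,v)$-equation separately according to whether $\|\nabla h\|^2=\gamma_1^2$ vanishes or not. No gaps.
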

\begin{proof}
	Since $(M,g)$ is a $pp$-wave, there exist local coordinates so that the metric is given by \eqref{eq:3-dim-Brinkmann-wave} where $F(u,v,x)=F(v,x)$. Thus, we compute the expression of $G^h$: 
	\[
	\begin{array}{l}
	G^h(\partial_u,\partial_u)=-\partial_u^2 h,\;G^h(\partial_x,\partial_x)= \Lambda+2 \partial_u\partial_v h-F \partial_u^2 h, \; G^h(\partial_u,\partial_x)=-\partial_u\partial_x h,\\
	\noalign{\medskip}
	G^h(\partial_v,\partial_v)= F \left(-F 
	\partial_u^2h+\partial_x^2h+2 \partial_u\partial_vh+\Lambda
	\right)+\frac{\partial_vF
		\partial_uh-\partial_xF \partial_xh-2 \partial_v^2h-h\partial_x^2F}2,\\
		\noalign{\medskip}
	G^h(\partial_v,\partial_x)=-\partial_v\partial_x h+\frac{\partial_x F \partial_u h}{2}, \; G^h(\partial_u,\partial_v)=\Lambda+\partial_x^2 h+\partial_u\partial_v h-F \partial_u^2 h.
	\end{array}
	\]
From $G^h(\partial_u,\partial_u)=G^h(\partial_u,\partial_x)=0$ we get that $h(u,v,x)=h_1(v)u+h_0(v,x)$. Now, from  $G^h(\partial_x,\partial_x)=\Lambda+2h_1'(v)=0$, we get that $h_1(v)=-\frac{\Lambda}2 v+k$ for a constant $k$. From $G^h(\partial_u,\partial_v)=\Lambda+h_1'(v)+\partial_x^2 h_0(v,x)=0$, the function $h$ reduces to the form $h(u,v,x)=\left(-\frac{\Lambda}2 v+k\right)u-\frac{\Lambda}{4}x^2+h_{01}(v)x+h_{00}(v)$. 

If we differentiate $G^h(\partial_v,\partial_x)=-h_{01}'(v)+\frac{1}4 \left(2k-v \Lambda\right) \partial_x F(v,x)=0$ with respect to $x$, we obtain $\frac{1}4 \left(2k-v \Lambda\right) \partial_x^2 F(v,x)=0$. If $\partial_x^2 F(v,x)=0 $ then the manifold is Ricci flat and, hence, flat. Therefore, we conclude that $\Lambda=k=0$ and $G^h(\partial_v,\partial_x)=-h_{01}'(v)=0$, so $h_{01}$ is indeed constant. The function $h$ reduces to $h(u,v,x)=h_{01}x+h_{00}(v)$, with $\nabla h=h_{00}'(v)\partial_u+h_{01} \partial_x$ and $\|\nabla h\|^2=h_{01}^2$. 

We analyze separately the isotropic case ($\nabla h$ is lightlike: $h_{01}=0$) and the non-isotropic case ($\nabla h$ is spacelike: $h_{01}\neq 0$). If $h_{01}=0$, then the only non-vanishing component of $G^h$ is  $G^h(\partial_v,\partial_v)=-h_{00}''(v)-\frac12 h_{00}(v) \partial_x^2F(v,x)$. From $G^h=0$ we obtain that $F(v,x)$ is a polynomial of degree two of the form $F(v,x)=-\frac{h_{00}''(v)}{h_{00}(v)}x^2+F_1(v)x+F_0(v)$. Therefore $g$ is a plane wave and $F$ can be further normalized so that  $F(v,x)=-\frac{h_{00}''(v)}{h_{00}(v)}x^2$ (see, for example, \cite{leistner}). This corresponds to Assertion~(1).

We assume now that $\nabla h$ is spacelike, i.e. $h_{01}\neq 0$. There is only one remaining nonzero term of $G^h$:
\[
G^h(\partial_v,\partial_v)=\frac{1}{2} \left(-\partial_x^2F(v,x) (h_{00}(v)+h_{01}
x)-h_{01} \partial_xF(v,x)-2 h_{00}''(v)\right).
\]
We solve $G^h(\partial_v,\partial_v)=0$ to obtain the form of $F$ in terms of $\gamma_0(v)=h_{00}(v)$ and $\gamma_1=h_{01}$ as given in Assertion~(2).
\end{proof}

\subsection{Brinkmann waves}

It was shown in Theorem~\ref{th:main} that Brinkmann waves play a role when the Ricci operator is $2$-step nilpotent. We now show that all $3$-dimensional isotropic solutions in this case are indeed plane waves.

\smallskip

\noindent{\it Proof of Theorem~\ref{th:3-dim}(1).} We assume that the Ricci operator is $2$-step nilpotent. By Theorem~\ref{th:main}, $(M,g)$ is a Brinkmann wave where $\nabla h$ is a recurrent vector field. In dimension three, the fact that the Ricci operator is $2$-step nilpotent ensures that the Brinkmann wave admits a parallel null vector field (see \cite{leistner}) and the manifold is a $pp$-wave. Now the result follows from Theorem~\ref{th:3-dim-pp-waves}~(1).
\qed

\begin{remark}\label{re:plane-waves}
	Notice that, as a consequence of Theorem~\ref{th:3-dim}~(1), for any function $h(v)$ with $h''(v)\neq 0$ there always exists a plane wave $(M,g_{pw})$ so that $(M,g_{pw},h\, dvol_{g_{pw}})$ is an isotropic solution to the vacuum weighted Einstein field equation \eqref{eq:vacuum-Einstein-field-equations}.
	
	Among plane waves metrics, given by expression \eqref{eq:3-dim-Brinkmann-wave} with $F(v,x)=\alpha(v) x^2$, there are two families that are locally homogeneous \cite{Eduardo-Peter-Stana}:
	\begin{enumerate}
		\item The family $\mathcal{P}_c$, defined by $F(v,x)=- \beta(v) x^2$ with $\beta'=c \beta^{3/2}$ for a constant $c$ and $\beta>0$.
		\item The family of Cahen-Wallach symmetric spaces $\mathcal{CW}_\varepsilon$, defined by $F(v,x)=\varepsilon x^2$.
	\end{enumerate} 

Since solutions in Theorem~\ref{th:3-dim}~(1) are of the form $ F(v,x)=-\frac{\alpha''(v)}{\alpha(v)}x^2$, we have the following:
\begin{enumerate}
	\item Metrics in \eqref{eq:3-dim-Brinkmann-wave} with $ F(v,x)=-\frac{4}{c^2 v^2}x^2$ belong to the family $\mathcal{P}_c$ and, for $h(u,v,x)=a_1 (c v)^{\frac{c-\sqrt{c^2+16}}{2 c}}+a_2 (c v)^{\frac{c+\sqrt{c^2+16}}{2 c}}$, are homogeneous solutions to the  vacuum weighted Einstein field equation \eqref{eq:vacuum-Einstein-field-equations}. These metrics show null singularities and are geodesically incomplete (we refer to \cite{Blau-OLoughlin} for details).
	\item For $h(u,v,x)=b_1 e^{v \sqrt{\varepsilon }}+b_2 e^{-v \sqrt{\varepsilon }}$, if $\varepsilon>0$, and for $h(u,v,x)=b_1 \cos \left(v
	\sqrt{-\varepsilon }\right)$ $+b_2 \sin \left(v \sqrt{-\varepsilon }\right)$, if $\varepsilon<0$, Cahen-Wallach spaces $\mathcal{CW}_\varepsilon$ are solutions to the  vacuum weighted Einstein field equation \eqref{eq:vacuum-Einstein-field-equations}. Moreover, these metrics are geodesically complete (see \cite{Blau-OLoughlin,Cahen-Leroy-Parker-Tricerri-Vanhecke}). Also, for appropriate $h>0$ one has $\Hes_h\neq 0$, so there exist global solutions to \eqref{eq:vacuum-Einstein-field-equations}.
\end{enumerate}
\end{remark}

\begin{remark}

We analyze isotropic solutions to the vacuum weighted Einstein field equation \eqref{eq:vacuum-Einstein-field-equations} with a Brinkmann wave as a background metric by considering local coordinates as in \eqref{eq:3-dim-Brinkmann-wave}. By Lemma \ref{le:Ricci-nilpotent}, we have $\Lambda=\tau=\Delta h=0$. The scalar curvature takes the form $\tau=\partial_u^2F(u,v,x)$, thus we obtain $F(u,v,x)=F_1(v,x)u+F_0(v,x)$. With this reduction, the only nonzero component of the square of the Ricci operator is $\Ric^2(\partial_v)=\frac{1}{4} \left(\partial_xF_1\right)^2 \partial_u$. A direct calculation shows $G^h(\partial_u,\partial_u)=-\partial_u^2 h(u,v,x)$ and $G^h(\partial_u,\partial_x)=-\partial_u\partial_x h(u,v,x)$ and, from $G^h(\partial_u,\partial_u)=G^h(\partial_u,\partial_x)=0$, we get that $h(u,v,x)=h_1(v)u+h_0(v,x)$. We differentiate the term $G^h(\partial_x,\partial_x)=- h_1(v)F_1(v,x)+2h_1'(v)$ with respect to $x$ to see that $h_1(v)\partial_x F_1(v,x)=0$. Hence $h_1=0$ or $\partial_x F_1(v,x)=0$.

If $h_1(v)=0$, then $h(u,v,x)=h_0(v,x)$ and $0=\|\nabla h\|^2=\left(\partial_xh_{0}(v,x)\right)^2$, so the density function reduces to $h(u,v,x)=h_{00}(v)>0$. Now, we compute $0=G^h(\partial_v,\partial_x)=\frac{1}{2}h_{00}(v)\partial_x F_1(v,x)$ to obtain that in any case $\partial_x F_1(v,x)=0$. This condition yields $F_1(v,x)=F_1(v)$ and $(M,g)$ is at most 2-step nilpotent. It now follows that the manifold is a $pp$-wave (see, for example, \cite{leistner}). Hence, from Theorem~\ref{th:3-dim-pp-waves}, we conclude the following:
\begin{quote}\it
	If $(M,g,h\,dvol_g)$ is an isotropic solution  to the vacuum weighted Einstein field equation with $(M,g)$ a $3$-dimensional Brinkmann wave, then $(M,g)$ is a plane wave as described in Theorem~\ref{th:3-dim} (1).
\end{quote}
Moreover, notice that none of the Kundt spacetimes in Theorem \ref{th:3-dim} (2) are Brinkmann waves, since they are isotropic solutions with 3-step nilpotent Ricci operator.

In the cases where $\nabla h$ is not lightlike, however, we observe a loss of rigidity in the underlying manifold. Indeed, there exist non-isotropic solutions which are Brinkmann waves but not $pp$-waves. The following example illustrates this fact. 
\end{remark}

\begin{example}\label{ex:Brinkmann-non-isotropic}\rm
Let $(M,g)$ be a Brinkmann wave with metric given by \eqref{eq:3-dim-Brinkmann-wave} where
\[
F(v,x)=\frac{\left(4 u v-x^2\right) \log (v x)+x^2}{2 v^2}.
\]
The Ricci operator is given by 
\[
\Ric(\partial_u)=0,\quad \Ric(\partial_v)=\frac{4 u v+2 x^2 \log (v x)+x^2}{4 v^2 x^2}\partial_u+\frac{1}{v x}\partial_x, \quad \Ric(\partial_x)=\frac{1}{v x}\partial_u,
\]
so it is $3$-step nilpotent and, thus, it is not a $pp$-wave. A straightforward calculation shows that, for $h(u,v,x)=vx$ and  $\Lambda=0$, $(M,g,h)$ is a solution of equation \eqref{eq:vacuum-Einstein-field-equations}. Moreover, $\nh=x \partial_u+v\partial_x$, so $\|\nabla h\|^2=v^2$ and $\nh$ is spacelike.
\end{example}

As a consequence of Lemma~\ref{le:Ricci-nilpotent}, all isotropic solutions to the vacuum weighted Einstein equation \eqref{eq:vacuum-Einstein-field-equations} have vanishing scalar curvature. However, this is not necessarily the case if $\nabla h$ is not lightlike, as the following examples of Brinkmann waves show.

\begin{example}\label{ex:sc-neq-0}\rm
	We consider $\kappa\neq 0$ and define the following examples:
	\begin{enumerate}
		\item For $\kappa>0$, let $g$ be a Brinkmann metric defined by
		\eqref{eq:3-dim-Brinkmann-wave} with
		\[
		F(u,v,x)=\frac{u^2 \kappa}{2}+\alpha(v) \left(u+2 \sqrt{\frac{2}{\kappa}} \operatorname{arctanh}\left(\tan \left(\frac{x
			\sqrt{\kappa}}{2\sqrt{2}}\right)\right)\right).
		\]
		Then the scalar curvature is $\tau=\kappa$ and the manifold satisfies equation \eqref{eq:vacuum-Einstein-field-equations} for $h(u,v,x)=\cos \left(x\sqrt{\frac{\kappa}2}\right)$ and $\Lambda=0$. Moreover, 
		\[
		\nabla h=-\sqrt{\frac{\kappa}2} \sin \left(x\sqrt{\frac{\kappa}2}\right)\partial_x \text{ and } \|\nabla h\|=\frac12 \kappa \sin^2\left(x \sqrt{\frac{\kappa}2}\right)>0,
		\] 
		so the vector field $\nabla h$ is spacelike, since $\nabla h\neq 0$.
		
		\item For $\kappa<0$, let $g$ be a Brinkmann metric defined by
		\eqref{eq:3-dim-Brinkmann-wave} with
		\[
		F(u,v,x)=\frac{u^2 \kappa}2+\sqrt{\frac{2}{-\kappa}} \alpha(v) e^{-\frac{x \sqrt{-\kappa}}{\sqrt{2}}}.
		\]
		Then the scalar curvature is $\tau=\kappa$ and the manifold satisfies equation \eqref{eq:vacuum-Einstein-field-equations} for $h(u,v,x)=e^{\sqrt{\frac{-\kappa}{2}}x}$  and $\Lambda=0$. Moreover, 
		\[
		\nabla h=\sqrt{\frac{-\kappa}{2}}e^{\sqrt{\frac{-\kappa}{2}}x}\partial_x \text{ and } \|\nabla h\|=-\frac12 \kappa e^{\sqrt{-2\kappa}x}>0,
		\] 
		so the vector field $\nabla h$ is globally defined and it is spacelike.
	\end{enumerate} 
	
In conclusion, any constant scalar curvature $\tau$ is realizable by a solution of the vacuum Einstein field equation \eqref{eq:vacuum-Einstein-field-equations} with vanishing cosmological constant and a Brinkmann wave as a background metric.
\end{example}

\subsection{Kundt spacetimes}
We consider a $3$-dimensional Kundt spacetime and work with a metric given in local coordinates as in \eqref{eq:3-dim-kundt}.

\begin{lemma}\label{le:3-kundt}
Let $(M,g)$ be a $3$-dimensional Kundt spacetime with lightlike geodesic and expansion-free vector field $V$. If $\Ric(V)=0$ and $\tau=0$ then there exist local coordinates $(u,v,x)$ such that 
$g$ is of the form given in \eqref{eq:3-dim-kundt} with
\begin{equation}\label{eq:F-W-kundt-3-dim}
\begin{array}{rcl}
	F(u,v,x)&=&\frac{u^2}{x^2}+ \gamma_1(v,x) u+\gamma_0(v,x),\\
	\noalign{\smallskip}
	W(u,v,x)&=&-\frac{2u}{x}.
\end{array}
\end{equation}
\end{lemma}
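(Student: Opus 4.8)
The plan is to start from the Kundt normal form~\eqref{eq:3-dim-kundt} with $V=\partial_u$, turn the hypotheses $\Ric(V)=0$ and $\tau=0$ into differential equations for the metric functions $F$ and $W$, solve them, and use the residual freedom of Kundt coordinates to reach the normalized form~\eqref{eq:F-W-kundt-3-dim}. Since $(M,g)$ is a Kundt spacetime with distinguished lightlike field $V$, we may choose coordinates in which $g$ is given by~\eqref{eq:3-dim-kundt} and $V=\partial_u$; note that $\det g$ is constant, which is why the expansion $\theta$ in~\eqref{eq:optical-scalars} vanishes automatically, and that $\partial_u$ is geodesic, in agreement with the hypotheses. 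The condition $\Ric(V)=0$ amounts to $\rho(\partial_u,\partial_j)=0$ for $j\in\{u,v,x\}$, to which we adjoin $\tau=0$.

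First I would compute the Christoffel symbols of~\eqref{eq:3-dim-kundt} and the components of $\rho$. For a Kundt metric one checks that $\rho(\partial_u,\partial_u)=0$ identically, so this equation is vacuous. The component $\rho(\partial_u,\partial_x)$ equals, up to a nonzero constant factor, $\partial_u^2 W$; hence $\rho(\partial_u,\partial_x)=0$ forces $W$ to be affine in $u$, and I write $W(u,v,x)=W_1(v,x)\,u+W_0(v,x)$. Because $g^{vv}=g^{vx}=0$ and $\rho(\partial_u,\cdot)=0$, the scalar curvature collapses to $\tau=\rho(\partial_x,\partial_x)$, and a direct computation shows $\rho(\partial_x,\partial_x)$ depends only on $W$; imposing $\tau=0$ then gives the single scalar equation $\partial_x W_1=\tfrac12 W_1^2$.

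This ODE in $x$ (with $v$ a parameter) integrates to $W_1=-2/(x+c(v))$ for an integration function $c$; the branch $W_1\equiv 0$ is excluded because it would render $\Ric$ at most $2$-step nilpotent, i.e. a Brinkmann wave. The remaining equation $\rho(\partial_u,\partial_v)=0$ reduces, after using $\partial_u^2W=0$ and $\partial_u\partial_xW=\tfrac12(\partial_uW)^2$, to $\partial_u^2 F=\tfrac12 W_1^2$, so that $F=\tfrac14 W_1^2\,u^2+\gamma_1(v,x)\,u+\gamma_0(v,x)$ with $\gamma_0,\gamma_1$ undetermined by these equations. Finally, the coordinate change $x\mapsto x+c(v)$ normalizes $W_1$ to $-2/x$ (altering $F$ and $W_0$ only by terms of lower order in $u$), and a subsequent shift $u\mapsto u+\psi(v,x)$, with $\psi$ solving $2\psi_x+W_1\psi=W_0$, removes $W_0$; both transformations preserve the form~\eqref{eq:3-dim-kundt} and the quadratic shape of $F$. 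Then $W=-2u/x$ and $\tfrac14 W_1^2=1/x^2$, which is exactly~\eqref{eq:F-W-kundt-3-dim}.

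The laborious but routine part is the explicit curvature computation. I expect the main obstacle to be the coordinate normalization: the condition $g_{xx}=1$ restricts the admissible transverse reparametrizations, so one must verify that $c(v)$ and $W_0$ can be removed simultaneously while keeping the Kundt normal form and without changing the $u^2$-coefficient of $F$ away from $1/x^2$.
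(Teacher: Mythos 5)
Your proposal is correct and follows essentially the same route as the paper's proof: impose $\Ric(\partial_u)=0$ and $\tau=0$ on the Kundt normal form \eqref{eq:3-dim-kundt} to obtain $\partial_u^2W=0$ and the Riccati equation $2\partial_x\omega_1=\omega_1^2$ for the $u$-linear coefficient of $W$, integrate to get $\omega_1=-2/(x+c(v))$ and $\partial_u^2F=\tfrac12\omega_1^2$, and normalize via the shifts $x\mapsto x+c(v)$ and $u\mapsto u+\psi(v,x)$. The only differences are cosmetic: you extract the Riccati equation from $\tau=\rho(\partial_x,\partial_x)$ rather than by eliminating $\partial_u^2F$ between the two hypotheses, your linear ODE for $\psi$ has an immaterial sign/factor slip (any first-order linear ODE is locally solvable, which is all that is needed), and you explicitly discard the degenerate branch $\omega_1\equiv 0$, which the paper passes over silently and which indeed must be excluded by the $3$-step nilpotency in force where the lemma is applied.
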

\begin{proof}
We consider the form of the metric given in \eqref{eq:3-dim-kundt}, where $V=\partial_u$. A direct calculation shows that
\[
\Ric(V)= \frac{1}{2}
\left(\partial_u^2F-\partial_uW^2+\partial_u\partial_xW-2 W \partial_u^2W\right)\partial_u + \frac12 \partial_u^2 W\partial_x.
\]
Hence, since $\Ric(V)=0$, we have that $\partial_u^2 W=0$, so $W(u,v,x)=\omega_1(v,x)u+\omega_0(v,x)$. Now, $\Ric(V)=\frac{1}{2} \left(\partial_u^2F+\partial_x\omega_1-\omega_1^2\right)\partial_u$ and $\tau=\partial_u^2F+2 \partial_x\omega_1-\frac{3}{2} \omega_1^2$.
From these relations we obtain that  $2\partial_x\omega_1-\omega_1^2=0$ and, solving this differential equation, we obtain $\omega_1(v,x)=-\frac{2}{x+\varphi(v)}$. Moreover, since  $\partial_u^2F=\omega_1^2-\partial_x\omega_1=\partial_x\omega_1$, we get that $F(u,v,x)=\frac{u^2}{(x+\varphi (v))^2}+ \gamma_1(v,x) u+\gamma_0(v,x)$.

Appropriate changes of coordinates allow us to simplify the form of the functions $F$ and $W$ as follows. We refer to \cite{chow2010kundt} for changes of coordinates of $3$-dimensional Kundt spacetimes with functions $F$ and $W$ which are polynomial of degrees $3$ and $2$, respectively, in the variable $u$; and to \cite{podolsky2009general} for changes of coordinates in a broader context. Firstly, by setting $( u,v, x )=(\tilde u, \tilde v, \tilde x+\varphi(\tilde v))$ one can write $	F(u,v,x)=\frac{u^2}{x^2}+ \gamma_1(v,x) u+\gamma_0(v,x)$ and $W(u,v,x)=-\frac{2u}{x}+\omega_0(v,x)$. Moreover, a new change of the form $(u,v, x )=(\tilde u+\psi(\tilde v,\tilde x), \tilde v, \tilde x)$ for  $\psi(\tilde v,\tilde x)$ solving the equation $\omega_0 + \omega_1 \psi + \partial_{\tilde x} \psi=0$ transforms $W$ into a function of the form given above.
\end{proof}

{\it Proof of Theorem~\ref{th:3-dim}(2).} Let $(M, g, h\, dvol_g)$ an isotropic solution of \eqref{eq:vacuum-Einstein-field-equations}. If the Ricci operator is $3$-step nilpotent then, by Theorem~\ref{th:main}, $(M,g)$ is a Kundt spacetime where $\nh$ is the distinguished null geodesic expansion-free vector field. Hence, there exist coordinates $(u,v,x)$ as in \eqref{eq:3-dim-kundt} with $\nh=\partial_u$. For a general function $h(u,v,x)$ we compute
\[
\nabla h (u,v,x)=\left(\left(\omega^2-F\right)\partial_uh -\omega\partial_xh
+\partial_vh\right) \partial_u+\partial_uh\, \partial_v+\left(\partial_xh-\omega\partial_uh
\right)\partial_x
\]
to see that $\nabla h=\partial_u$ if and only if $h(u,v,x)=v+\kappa$, where $\kappa$ is a constant. We normalize the variable $v$ and consider  $h(u,v,x)=v$. Now, based on Lemma~\ref{le:3-kundt}, we consider $F$ and $W$ given by expression \eqref{eq:F-W-kundt-3-dim}. A direct calculation of the tensor $G^h$ shows that the nonzero components, up to symmetries, are
\[
\begin{array}{rcl}
G^h(\partial_v,\partial_v)&=&-\frac{ u
	v x \partial_x\gamma_1(v,x)- v x
	\partial_x\gamma_0(v,x)+ v
	\gamma_0(v,x)+ u}{ x^2}\\
\noalign{\smallskip}
&& -\frac{v
\partial_x^2\gamma_0(v,x)+ u v \partial_x^2\gamma_1(v,x)+ \gamma_1(v,x)}{2},\\
\noalign{\medskip}
G^h(\partial_v,\partial_x)&=&\frac{1}{2}
v \partial_x\gamma_1(v,x)+\frac{1}{x}.
\end{array}
\]
From $G^h(\partial_v,\partial_x)=0$ we get that $\gamma_1(v,x)=\alpha_1(v)-\frac{2 \log (x)}{v}$. Now, simplifying and solving $G^h(\partial_v,\partial_v)=0$, we obtain for $\gamma_0$ the expression in Theorem~\ref{th:3-dim}~(2). This completes the proof of Theorem~\ref{th:3-dim}~(2).\qed

\begin{remark}\label{re:VSI}
	A spacetime is said to have {\it vanishing scalar invariants} (VSI) (respectively, {\it constant scalar invariants} (CSI))  if all polynomial scalar invariants constructed from the curvature tensor and its covariant derivatives are zero (respectively, constant). 
	
	Three-dimensional locally CSI spacetimes were classified in \cite{CSI-dim3}, showing that they are locally homogeneous or a Kundt spacetime. Metrics in Theorem~\ref{th:3-dim}~(2) are a subclass of VSI Kundt metrics (cf. \cite{CSI1}).
\end{remark}

\begin{remark}\label{re:CPE}
	In \cite{Besse}, it was shown that an $n$-dimensional compact Riemannian manifold which is critical for the Einstein-Hilbert functional, restricted to the space of metrics with constant scalar curvature and unit volume, satisfies the {\it Critical Point Equation (CPE):}  
	\[
	(f+1)\rho-\Hes_f+\left(\Delta f-\frac{\tau}{n}\right) g=0,
	\]
	for a certain function $f$. Since the scalar curvature is assumed to be constant, this is a divergence-free equation formally similar to equation \eqref{eq:vacuum-Einstein-field-equations}. Besse conjectured in \cite{Besse} that the only critical compact Riemannian manifolds are standard spheres. Since then,  a number of papers have provided positive results under some extra assumptions (see, for example, \cite{Hwang2003,Neto}). A similar analysis to the one performed in Sections~\ref{sect:general-dim} and \ref{sect:3-dim} leads to classification results for solutions of this equation in the isotropic case if translated to Lorentzian signature. Furthermore, examples of solutions to this equation can be found among Kundt spacetimes and $pp$-waves. Thus, for example, since $\Delta f=\tau=0$ for isotropic solutions, $3$-dimensional Cahen-Wallach symmetric spaces ($\mathcal{CW}_\varepsilon$) provide geodesically complete solutions to the CPE, which are not Einstein, for $f(u,v,x)=c_1 e^{v \sqrt{\epsilon }}+c_2 e^{-v \sqrt{\epsilon }}-1$, if $\varepsilon>0$, and for $f(u,v,x)=c_1 \cos \left(v \sqrt{-\epsilon }\right)+c_2 \sin \left(v \sqrt{-\epsilon }\right)-1$, if $\varepsilon<0$ (cf. Remark~\ref{re:plane-waves}).	
\end{remark}

\section{Some remarks on four-dimensional spacetimes}\label{sect:4-dim-examples}

In view of Theorem~\ref{th:main}, if an isotropic solution to equation ~\eqref{eq:vacuum-Einstein-field-equations} is Ricci flat, then $\Hes_h=0$, so $\nabla h$ is a parallel lightlike vector field and the spacetime is a Brinkmann wave. 
The Ricci tensor determines the curvature in dimension three, so Ricci-flat $3$-dimensional manifolds are necessarily flat. However, there are $4$-dimensional isotropic solutions which are Ricci-flat but not flat. The following result shows that all these spacetimes are indeed $pp$-waves.

\begin{theorem}\label{th:4-dim-Ricci-flat}
Let $(M,g,h\, dvol_g)$ be a 4-dimensional isotropic Ricci-flat solution of the vacuum weighted Einstein field equation. Then $(M,g)$ is a $pp$-wave.
\end{theorem}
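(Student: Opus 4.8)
The plan is to exhibit the defining curvature condition of a $pp$-wave directly, exploiting the fact that in dimension four the transverse curvature has a single independent component which is pinned down by one entry of the Ricci tensor. First I would record the reductions already available: since $(M,g,h\,dvol_g)$ is an isotropic solution, Lemma~\ref{le:Ricci-nilpotent} gives $\Delta h=\Lambda=\tau=0$, so \eqref{eq:vacuum-Einstein-field-equations} collapses to $h\rho=\Hes_h$; imposing Ricci-flatness $\rho=0$ forces $\Hes_h=0$. This is exactly the situation of Theorem~\ref{th:main}(1), already noted at the start of this section, so $V:=\nabla h$ is a parallel lightlike vector field and $(M,g)$ is a Brinkmann wave whose distinguished null direction is genuinely parallel. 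It then remains only to verify the $pp$-wave condition $R(V^\perp,V^\perp)=0$.

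Next I would pass to the pseudo-orthonormal frame $\mathcal{B}=\{V,U,E_1,E_2\}$ introduced before Lemma~\ref{le:Ricci-nilpotent}, with $g(V,U)=g(E_i,E_i)=1$ and all other products zero, so that $V^\perp=\spanned\{V,E_1,E_2\}$. The structural observation is that a parallel null vector annihilates every curvature component in which it occurs: from $R(X,Y)V=0$ together with the antisymmetry in each pair and the pair-interchange symmetry of $R$, one gets $R(\cdot,\cdot,\cdot,\cdot)=0$ as soon as $V$ sits in any of the four slots. Hence, among all components $R(A,B,C,D)$ with $A,B,C,D\in V^\perp$, the only one that is not automatically zero is $R(E_1,E_2,E_1,E_2)$, and therefore $R(V^\perp,V^\perp)=0$ is equivalent to the single scalar identity $R(E_1,E_2,E_1,E_2)=0$. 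This is precisely where dimension four is used: with only two transverse spacelike directions the purely transverse curvature carries one independent component.

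Finally I would close the argument with Ricci-flatness. Expanding the trace defining $\rho(E_1,E_1)$ in the frame $\mathcal{B}$ (the inverse metric pairs $V$ with $U$ and each $E_i$ with itself) and discarding every term containing $V$, one is left with $\rho(E_1,E_1)=R(E_2,E_1,E_1,E_2)=-R(E_1,E_2,E_1,E_2)$. Since $\rho=0$, this yields $R(E_1,E_2,E_1,E_2)=0$, hence $R(V^\perp,V^\perp)=0$, so $(M,g)$ is a $pp$-wave. I expect the only delicate points to be bookkeeping the null-frame trace correctly and making explicit why the conclusion is genuinely four-dimensional: in dimension five or higher the transverse Riemann tensor has strictly more components than its Ricci trace, so Ricci-flatness would no longer force the transverse curvature, and hence the $pp$-wave condition, to vanish.
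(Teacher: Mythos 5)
Your reductions are correct and match the paper's: $\Delta h=\Lambda=\tau=0$, hence $\Hes_h=0$, hence $V=\nabla h$ is parallel and every curvature component with $V$ in any slot vanishes (via $R(X,Y,Z,\nabla h)=(\nabla_X\Hes_h)(Y,Z)-(\nabla_Y\Hes_h)(X,Z)=0$ plus the symmetries). Your trace computation giving $R(E_1,E_2,E_1,E_2)=0$ from $\rho(E_1,E_1)=0$ is also fine. The gap is in the step where you declare that $R(V^\perp,V^\perp)=0$ is \emph{equivalent} to the single identity $R(E_1,E_2,E_1,E_2)=0$. That would be true if the $pp$-wave condition only constrained components with all four arguments in $V^\perp$, but the condition is on the curvature \emph{operator}: $R(X,Y)=0$ for all $X,Y\in V^\perp$, i.e. $R(X,Y,Z,W)=0$ for $X,Y\in\spanned\{V,E_1,E_2\}$ and $Z,W$ \emph{arbitrary}. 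After discarding everything containing $V$, this still leaves the components $R(E_1,E_2,E_1,U)$ and $R(E_1,E_2,E_2,U)$, which you never address. These are not forced to vanish by the parallel null vector together with $R(E_1,E_2,E_1,E_2)=0$: a generic (non-$pp$) Brinkmann wave has nonzero components of exactly this type, and neither the pair symmetry nor the first Bianchi identity reduces them to purely transverse ones. This is also why your closing heuristic about dimension counting is misleading — the obstruction is not only the purely transverse block.

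The repair uses the same tool you already deployed, applied to the mixed Ricci components rather than only to $\rho(E_1,E_1)$: expanding the trace and discarding $V$-terms,
\begin{equation*}
0=\rho(E_2,U)=R(E_1,E_2,U,E_1),\qquad 0=\rho(E_1,U)=R(E_2,E_1,U,E_2),
\end{equation*}
which kills the two missing components. Together with $R(E_1,E_2,E_1,E_2)=0$ and the vanishing of all $V$-components, this gives $R(E_1,E_2)=0$ and $R(V,E_i)=0$ as operators, hence $R(V^\perp,V^\perp)=0$. This is precisely the computation in the paper's proof, so once the missing components are accounted for your argument coincides with it.
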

\begin{proof}
If $(M,g,h\, dvol_g)$ is an isotropic solution of \eqref{eq:vacuum-Einstein-field-equations} then, from Lemma~\ref{le:Ricci-nilpotent}, we have $\Delta h=0$ and $\Lambda=0$. Since $\rho=0$, equation \eqref{eq:vacuum-Einstein-field-equations} implies $\Hes_h=0$.
For arbitrary vector fields $X$, $Y$, $Z$ we have
\begin{equation}\label{eq:cur-zero-terms-h}
R(X,Y,Z,\nabla h)=(\nabla_X \Hes_h)(Y,Z)-(\nabla_Y \Hes_h)(X,Z)=0.
\end{equation}
Let $\mathcal{B}=\{\nh, U, E_1,E_2\}$ be a pseudo-orthonormal basis such that $g(\nh,U)=g(E_i,E_i)=1$ for $i=1,2$. Hence $\nabla h^\perp=\spanned\{\nabla h,E_1,E_2\}$. Due to \eqref{eq:cur-zero-terms-h}, we have that $R(\nabla h,E_i)=0$. We check that  $R(E_1,E_2)=0$ by computing
\[
\begin{array}{c}
0=\rho(E_2,U)=R(E_2,U,U,\nh)+R(E_2,E_1,U,E_1)=R(E_1,E_2,E_1,U),\\
\noalign{\smallskip}
0=\rho(E_1,U)=R(E_1,U,U,\nh)+R(E_1,E_2,U,E_2)=-R(E_1,E_2,E_2,U),\\
\noalign{\smallskip}
0=\rho(E_1,E_1)=2R(E_1,U,E_1,\nh)+R(E_1,E_2,E_1,E_2)= R(E_1,E_2,E_1,E_2).
\end{array}
\]
Therefore, $(M,g)$ is a Brinkmann wave with parallel lightlike vector field $\nabla h$ such that $R(\nabla h^\perp,\nabla h^\perp)=0$, so $(M,g)$ is a $pp$-wave.
\end{proof}

\begin{remark}
	A $pp$-wave of any dimension is given in local coordinates by expression \eqref{eq:local-coord-kundt-xeral} with $\partial_uF=0$, $W_{x_i}=0$ and $g_{ij}=\delta_{ij}$. The only possibly  nonzero component of its Ricci tensor is $\rho(\partial_v,\partial_v)=-\frac1{2}\bar{\Delta}F$, where $\bar{\Delta}=\sum_{i}\frac{\partial^2}{\partial x_i^2}$ is the Laplacian with respect to the flat spatial metric given by $g_{ij}$. Hence, a $pp$-wave is Ricci-flat if and only if $\bar{\Delta}F=0$. In dimension four, as a consequence of Theorem \ref{th:4-dim-Ricci-flat}, the only Ricci-flat isotropic solutions of the vacuum weighted Einstein field equation are $pp$-waves of this type.
	
	On the other hand, setting $h(u,v,x)=v$ in a $pp$-wave of arbitrary dimension, a straightforward calculation shows that $\nabla h=\partial_u$ is lightlike and $\Hes_h=0$. Thus, any $pp$-wave with $\bar{\Delta}F=0$ is a Ricci-flat isotropic solution of the vacuum weighted Einstein field equation with $h(u,v,x)=v$. 
\end{remark}

A natural question that arises in view of Theorem~\ref{th:3-dim} is  whether an analogous of assertion (1) holds in higher dimension. The  following example shows that, in general, isotropic solutions in Brinkmann waves to equation \eqref{eq:vacuum-Einstein-field-equations} do not need to be $pp$-waves, even if the Ricci operator is $2$-step nilpotent. 

\begin{example}\label{ex:Brinkmann-not-ppwave-4-dim}\rm
We consider local coordinates $(u,v,x_1,x_2)$ and the metric given, up to symmetry, by the following non-vanishing components:
\[
\begin{array}{l}
g(\partial_u,\partial_v)=1,\quad g(\partial_v,\partial_{x_2})=x_1x_2+vx_2^2,\\
\noalign{\medskip}
g(\partial_v,\partial_v)=   (-2 v x_2-x_1+2v x_2)u+\frac{-2 v^2 x_1^3 x_2-v x_1^4+3 v x_1^2 x_2^2+12 v x_1^2 x_2+x_1^3}{6 v}.
\end{array}
\]
The function $h(u,v,x_1,x_2)=v$ has lightlike gradient vector field $\nabla h=\partial_u$. A direct computation shows that this metric and the function $h$ provide a solution to the vacuum Einstein field equation \eqref{eq:vacuum-Einstein-field-equations} with $\Lambda=0$.  

The vector field $\nabla h$ is recurrent, since $\nabla\nabla h=-\frac{x_1}2 dv\otimes \nabla h$. Therefore, it is a Brinkmann wave. Moreover, the Ricci tensor has only one nonzero component: $\rho(\partial_v,\partial_v)=-\frac{x_1}{2v}$, so it is $2$-step nilpotent. 

Notice that $\nabla h^\perp=\spanned\{\partial_u,\partial_{x_1},\partial_{x_2}\}$. 
We check that 
\[
R(\partial_{x_1}, \partial_{x_2},\partial_v,\partial_{x_2})=\frac12,
\]
so $R(\nabla h^\perp,\nabla h^\perp)\neq 0$,  which means that the spacetime given by $g$ is not a $pp$-wave. Consequently, Theorem~\ref{th:3-dim}~(1) cannot be extended to higher dimension.
\end{example}

It was pointed out in Corollary~\ref{cor:4-warped} that isotropic solutions of the vacuum weighted Einstein field equation give rise to $4$-dimensional warped products which are Ricci-flat. The following are $4$-dimensional examples obtained by applying this construction.

\begin{example}\label{ex:4-dim-warpedproducts}\rm
We adopt notation from Theorem~\ref{th:3-dim-pp-waves}. Let $N_1$ be the plane wave given in Theorem~\ref{th:3-dim-pp-waves}~(1), let $h_1(u,v,x)=\alpha(v)$ and let $t$ be the coordinate of $\mathbb{R}$. The $4$-dimensional warped product $ M_1=N_1\times_{h_1} \mathbb{R}$ is Ricci-flat and its Weyl tensor (hence its curvature tensor) is determined, up to symmetries, by the following terms:
	\[
	W(\partial_v,\partial_x,\partial_v,\partial_x)=\frac{\alpha''(v)}{\alpha(v)} \,\,\text{ and }\,\, W(\partial_v,\partial_t,\partial_v,\partial_t)=-\alpha(v)\alpha''(v).
	\]
 Note that $M_1$ is still a Brinkmann wave with parallel lightlike vector field $V=\partial_u$. Furthermore, it satisfies the curvature conditions $R(V^\perp,V^\perp)=0$ and $\nabla_{V^\perp} R=0$, so it is indeed a plane wave.
	
Let $N_2$ be the $pp$-wave given in Theorem~\ref{th:3-dim-pp-waves}~(2) and $h_2(u,v,x)=\gamma_{1} x+\gamma_{0}(v)$. Then $M_2=N_2\times_{h_2} \mathbb{R}$ is a $4$-dimensional Ricci-flat warped product. Moreover, the Weyl tensor is determined, up to symmetries, by:
\[
W(\partial_v,\partial_x,\partial_v,\partial_x)=\frac{\gamma_1 \alpha(v)+2 \gamma_0(v) \gamma_0''(v)}{2 (\gamma_0(v)+\gamma_1 x)^2},  W(\partial_v,\partial_t,\partial_t,\partial_v)= \gamma_0(v) \gamma_0''(v)+\frac{\gamma_1 \alpha(v)}{2}.
\]
As in the previous example, $V=\partial_u$ is still parallel and $M_2$ satisfies $R(V^\perp,V^\perp)=0$, thus retaining the $pp$-wave character of $N_2$.
	
We adopt notation from Theorem~\ref{th:3-dim}~(2). Let $N_3$ be the Kundt spacetime given by \eqref{eq:Kundt-solutions} and $h_3(u,v,x)=v$. The $4$-dimensional warped product  $M_3=N_3\times_{h_3} \mathbb{R}$ is a Ricci-flat Kundt spacetime and its Weyl tensor is given, up to symmetries, by
\[
\begin{array}{c}
W(\partial_u,\partial_v,\partial_v,\partial_x)=-\frac{1}{vx},\quad W(\partial_v,\partial_t,\partial_v,\partial_t)=-\frac{1}{2} v \alpha_1(v)-\frac{u v}{x^2}+\log (x), \\
\noalign{\medskip} 
W(\partial_v,\partial_t,\partial_x,\partial_t)=\frac{v}{x}, \quad
W(\partial_v,\partial_x,\partial_v,\partial_x)=\frac{v \alpha_1(v)-\frac{6 u v}{x^2}-2 \log (x)}{2 v^2}.
\end{array}
\]
Since these examples are Ricci flat $4$-dimensional manifolds, they are solutions to the vaccum Einstein field equation. As such, their geometric information is encoded on the Weyl tensor, so it is convenient to analyze their Petrov type (we refer to \cite{Hall,stephani-et-al} for details). Since $M_1$ and $M_2$ are $pp$-waves, they are of type {\bf N} (one easily checks that $\iota_{\partial_u}W=0$). The warped product $M_3$, however, does not satisfy $\iota_{X}W=0$ for any vector field $X$, but $\iota_{\partial_u}W=-\frac{1}{vx}\,dv\otimes(dv\wedge dx)$, therefore it is of type {\bf III} (see \cite{Hall}). All these examples present a repeated principal null direction spanned by the distinguished lightlike vector field $\partial_u$. This is a common trait of Ricci-flat Kundt spacetimes, as a consequence of the Goldberg-Sachs theorem (see \cite{stephani-et-al}).

\end{example}

\section{Conclusions}
As a generalization of usual spacetimes, smooth metric measure spaces include a density function that affects  their geometry through the Bakry-\'Emery Ricci tensor. Based on this tensor, we propose a generalization of the Einstein tensor to this setting  as $	G^h=h \rho-\operatorname{Hes}_h +(\Delta h+\Lambda) g$ ({\it weighted Einstein tensor}), where  $\Lambda$ plays the role of a cosmological constant. $G^h$ preserves the main properties of being symmetric, concomitant of the metric $g$, the density function $h$
and their first two derivatives, and divergence-free (for manifolds with constant scalar curvature). Moreover, the expression of $G^h$ is related to the formal $L^2$-adjoint of the linearization of the scalar curvature function (Remark~\ref{re:static}).

The tensor $G^h$ gives rise to the vacuum weighted Einstein field equation $G^h=0$, whose solutions have constant scalar curvature (Lemma~\ref{le:const-sc}). We concentrate on the isotropic case, i.e. the case in which the gradient of $h$ is lightlike. Geometric conclusions are obtained and it is shown that isotropic solutions of the vacuum weighted Einstein field equation are: (i) Brinkmann waves with a parallel gradient vector field in the Ricci-flat case, (ii) Brinkmann waves if the Ricci operator is two-step nilpotent, and (iii) Kundt spacetimes if the Ricci operator is three-step nilpotent (see Theorem~\ref{th:main}).
Moreover, isotropic solutions to the vacuum weighted Einstein field equation are related  to Ricci-flat warped products with one-dimensional fiber (Corollary~\ref{cor:4-warped} and Remark~\ref{ex:4-dim-warpedproducts}).

More conclusive results are given in dimension three, where all non-flat isotropic solutions to the vacuum weighted Einstein field equation are described in local coordinates (Theorem~\ref{th:3-dim}). They are plane waves if the Ricci operator is two-step nilpotent and Kundt spacetimes with vanishing scalar invariants (VSI) if the Ricci operator is three-step nilpotent. Among plane waves, Cahen-Wallach symmetric spacetimes provide geodesically complete solutions (Remark~\ref{re:plane-waves}).

Several examples illustrate different phenomena for solutions in the non-isotropic case, where  the scalar curvature does not necessarily vanish (Remark~\ref{re:non-isotropic} and Examples~\ref{ex:Brinkmann-non-isotropic} and \ref{ex:sc-neq-0}), and in dimension  greater than three, where there exist isotropic solutions on Brinkmann waves that are not $pp$-waves (Example~\ref{ex:Brinkmann-not-ppwave-4-dim})).

\end{document}